\newcommand{\abs}[1]{{\left|#1\right|}}
\newcommand{\norma}[1]{{\left\Vert#1\right\Vert}}
\newcommand{\normab}[1]{{\big\lVert#1\big\rVert}}
\def\XXint#1#2#3{{\setbox0=\hbox{$#1{#2#3}{\int}$}
    \vcenter{\hbox{$#2#3$}}\kern-.5\wd0}}
\theoremstyle{definition}
\newtheorem{definizione}{Definition}[section]
\theoremstyle{plain}
\newtheorem{theorem}{Theorem}[section]
\newtheorem{lemma}[theorem]{Lemma}
\newtheorem{prop}[theorem]{Proposition}
\newtheorem{corollario}[theorem]{Corollary}
\theoremstyle{definition}
\newtheorem{esempio}{Example}[section]
\newtheorem{oss}[esempio]{Remark}
\newtheorem*{open*}{Open problems}
\DeclareMathOperator{\R}{\mathbb{R}}
\newcommand{\myfootnote}[2]{\begingroup
	\def\@makefnmark{}%
	\addtocounter{footnote}{-1}%
	\footnote{\textbf{#1} #2}
	\endgroup}
\title{Quantitative comparison results for first-order Hamilton-Jacobi equations}
\author{Vincenzo Amato, Luca Barbato}
\date{}
\newcommand{\Addresses}{{
\bigskip 
  \footnotesize 
 \textit{E-mail address}, V. ~Amato (corresponding author): \texttt{v.amato@ssmeridionale.it}

     \medskip 

  \textit{E-mail address}, L.~Barbato: \texttt{l.barbato@ssmeridionale.it} 
  
   \medskip 
 
  \textsc{Mathematical and Physical Sciences for Advanced Materials and Technologies, Scuola Superiore Meridionale, Largo San Marcellino 10, 80138 Napoli, Italy. }

 \par\nopagebreak 

}} 
\begin{document}

\maketitle
 \begin{abstract}
In this paper, we study a quantitative refinement of a classical symmetrisation result for first-order Hamilton–Jacobi equations. 
We prove that the deficit in the comparison result, established by Giarrusso and Nunziante, controls both the asymmetry of the domain and the deviation of the solution and data from radial symmetry. 
This yields a stability version of the Giarrusso–Nunziante inequality.\\
\textsc{Keywords:} Hamilton-Jacobi equations, stability, comparison results.\\
\textsc{MSC 2020: 35F21, 35B35, 35B51.}   
\end{abstract}

\section{Introduction}
In this work, we address a classical question in shape optimisation: how the shape of a domain influences the solutions to certain partial differential equations. We focus on first-order Hamilton-Jacobi equations, a class of PDEs that arise in a wide range of applications, including control theory, differential geometry, and classical mechanics.

Our main tool is symmetrisation: the process of comparing a given function (or domain) with a more symmetric one, typically a ball, and analysing how this affects the solutions of the equation.

More precisely, given a bounded domain \( \Omega \subset \mathbb{R}^n \) and a non-negative function \( f \in L^p(\Omega) \) defined on it, we consider the equation
\begin{equation}
\label{HJ_u}
    |\nabla u| = f(x) \quad \text{in } \Omega, \qquad u = 0 \text{ on } \partial \Omega.
\end{equation}
We then study its symmetrised counterpart, where the domain \( \Omega \) is replaced by a ball \( \Omega^\sharp \) (with the same volume as \( \Omega \)), and the function \( f \) is replaced by its increasing rearrangement \( f_\sharp \) (see Section \ref{section2} for its definition). The unique radially symmetric and decreasing solution to this symmetrised problem is denoted by \( u_G \).

A classical result by Giarrusso and Nunziante, \cite{GN}, shows that the \( L^1 \)-norm of the original solution \( u \) is always less than or equal to that of the symmetric solution \( u_G \):

\begin{equation}
\label{GN}
    \big\lVert u\big\rVert_{L^1(\Omega)} \leq \big\lVert u^\text{G}\big\rVert_{L^1(\Omega^{\sharp})}.
\end{equation}

For completeness, we recall that a (\textbf{generalized}) solution
 $u$ to \eqref{HJ_u} is any function $u \in W^{1,p}_0(\Omega)$, $p \geq 1$, satisfying $\abs{\nabla u}= f(x)$ almost everywhere in $\Omega$. 
 
However, uniqueness is not guaranteed: for instance, in the case \( \Omega = [0, 1] \), and \( f(x) = 1 \), infinitely many solutions exist. To ensure uniqueness, one may refer to the theory of viscosity solutions (see~\cite{tran}); however, for the purposes of~\cite{GN} and of this paper, the existence of at least one such 
$u$ is sufficient, as every solution satisfies~\eqref{GN}.

An extension of this result to more general equations, involving terms such as \( f(x) - \lambda u \), was given in \cite{GN2}.

A natural question is whether other norms (besides the \( L^1 \)-norm) of the solutions to \eqref{HJ_u} and the symmetrised version can also be compared. In \cite{ALT,FP}, the authors provide a rearrangement of the gradient of \( u \) that allows such comparisons in the \( L^q \)-norm. A further characterisation of these rearrangements appears in \cite{C}, where it is shown that they coincide with the increasing rearrangement for \( q = 1 \) and the decreasing one for \( q = \infty \).

In \cite{BM, M}, the rigidity of inequality \eqref{GN} is examined. In particular, the authors show that if equality holds in \eqref{GN}, then \( \Omega \), up to translation, must be a ball, and both \( u \) and \( f \) must be radially symmetric.

This naturally raises the question: can inequality \eqref{GN} be quantified? More precisely, if the deficit
\begin{equation}
    \label{deficit}
    \normab{u^\text{G}}_{L^1(\Omega^{\sharp})}-\normab{u}_{L^1(\Omega)} 
\end{equation}

is small, does this imply that \( \Omega \) is close to a ball, and that \( u \) and \( f \) are approximately radially symmetric?

To measure how far a domain \( \Omega \) is from being a ball, we use the Fraenkel asymmetry (see \cite{F}):

\begin{equation*}
    \alpha(\Omega):=\min_{x \in \R^{n}}\bigg \{  \dfrac{|\Omega\triangle B_r(x)|}{|B_r(x)|} \;,\; |B_r(x)|=|\Omega|\bigg \},
\end{equation*}
where \( \triangle \) denotes the symmetric difference.

Measuring how far a function is from being radial is more subtle. In related works on quantitative versions of the Pólya–Szegő, Hardy–Littlewood, and Talenti inequalities (see \cite{ABMP,CFET,CF}), the distance of a function \( g \) from radial symmetry is often measured by the \( L^p \)-norm of the difference between \( g \) and its decreasing rearrangement:

\[
\lVert g - g^\sharp\rVert_{L^p(\mathbb{R}^n)},
\]

where \( g \) is extended by zero outside \( \Omega \). We adopt this convention as well. 

The two Hamilton–Jacobi equations we consider are as follows:

\begin{subequations}
    \noindent\begin{minipage}{0.48\textwidth}
\begin{equation}
\begin{cases}
                    \abs{\nabla u} = f(x) & \text{a.e. in } \Omega \\
                    u = 0 & \text{on } \partial \Omega
                \end{cases}
                \label{original_u}
\end{equation}
    \end{minipage}
    \begin{minipage}{0.52\textwidth}
\begin{equation}
 \label{original_v}
 \quad
                \begin{cases}
            \big\lvert\nabla u^\text{G}\big\rvert = f_{\sharp}(x) & \text{a.e. in } \Omega^{\sharp} \\
                    u^\text{G} = 0 & \text{on } \partial \Omega^{\sharp}.
                \end{cases}
\end{equation}
    \end{minipage}\vskip1em
\end{subequations}
Our main theorem provides a quantitative version of inequality \eqref{GN}, showing that the deficit in the \( L^1 \) norm controls both the asymmetry of \( \Omega \) and the difference between \( u \) and \( u^\sharp \). More precisely:
\begin{theorem}
    \label{main_theorem} 
Let $\Omega$ be a bounded open set of $\R^n$, $n\geq 2$, and let $f\in L^\infty(\Omega)$ be a non-negative function with $$0<m\leq f\leq M.$$  
Let $u$ be a solution to \eqref{original_u} and let $u^\text{G}$ be the unique spherically decreasing solution to \eqref{original_v}.
Then there exist positive constants $ \theta=\theta(n)$ and    $ C_1:= C_1(n, \abs{\Omega}, m,M),\, C_2:=  C_2(n, \abs{\Omega}, m,M)$ such that
\begin{equation}\label{esti_tot}
     C_1 \alpha^3(\Omega)+ C_2 \inf_{x_0\in\R^n}\normab{u\pm u^\sharp(\cdot+x_0)}_{L^1(\R^n)}^{ \theta}\leq \normab{u^G}_{L^1(\Omega^{\sharp})}-\normab{u}_{L^1(\Omega)} .
\end{equation}
Moreover, the dependence of $ C_1$ on $n$, $\abs{\Omega}$, $m$ and $M$ is explicit.
\end{theorem}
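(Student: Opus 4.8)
The strategy is to revisit the chain of inequalities that yields the comparison result \eqref{GN} and to insert the quantitative (stability) versions of the two classical inequalities invoked there. By the standard representation for solutions of the eikonal-type equation $|\nabla u| = f$ with zero boundary data, one has $u(x) = \inf_{\gamma} \int_0^{\ell(\gamma)} f(\gamma(s))\,ds$ over paths from $x$ to $\partial\Omega$; in particular $u$ is Lipschitz, $\mu(t) := |\{u > t\}|$ is absolutely continuous, and along the level sets the coarea formula gives $-\mu'(t) = \int_{\{u=t\}} \frac{1}{|\nabla u|}\,d\mathcal{H}^{n-1} = \int_{\{u=t\}} \frac{1}{f}\,d\mathcal{H}^{n-1}$. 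The key point is that $\|u\|_{L^1(\Omega)} = \int_0^{+\infty} \mu(t)\,dt$, so it suffices to compare the distribution function $\mu$ of $u$ with that of $u^{\mathrm G}$. Combining the isoperimetric inequality (to bound $P(\{u>t\})$ from below by the perimeter of the ball of the same volume) with the Hardy–Littlewood inequality for $\int_{\{u=t\}} \frac1f$, and integrating the resulting ODE differential inequality for $\mu$, one recovers \eqref{GN}; the two inequalities used are exactly where sharp quantitative improvements exist in the literature.

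First I would make quantitative the isoperimetric step: the quantitative isoperimetric inequality (Fusco–Maggi–Pratelli) gives $P(\{u>t\}) \geq P(B_{r(t)})\big(1 + c(n)\,\alpha(\{u>t\})^2\big)$ for a.e. $t$, and then relate $\alpha(\{u>t\})$ for a positive-measure set of levels $t$ back to $\alpha(\Omega)$ itself. Here one uses that $\{u > t\}$ shrinks continuously from $\Omega$ (at $t = 0^+$) to the empty set, together with a continuity/selection argument: if $\alpha(\Omega)$ is not small then $\alpha(\{u>t\})$ stays bounded away from zero on an interval of levels whose length is controlled below in terms of $m, M, |\Omega|, n$ (because $u \leq M \cdot \mathrm{diam}(\Omega)$ and $u$ grows at a controlled rate from the boundary). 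Integrating the improved differential inequality over that interval produces the term $C_1\,\alpha^3(\Omega)$ — the cube (rather than the square) is the expected loss coming from converting a one-parameter family of quadratic gains into a single bulk estimate, via a Fuglede-type / interpolation argument. I would expect to use the quantitative Pólya–Szegő inequality (Cianchi–Esposito–Fusco–Trombetti, or the version the paper cites) at this stage as well, since it controls $\|u - u^\sharp(\cdot + x_0)\|_{L^1}$ by a deficit in the gradient rearrangement inequality, and it is precisely this deficit that the level-set estimates above are bounding.

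The second main ingredient is the quantitative Hardy–Littlewood inequality: on each level set one has $\int_{\{u=t\}} \frac1f \, d\mathcal{H}^{n-1} \geq (\text{the rearranged value}) $ with a remainder controlling how far $f$ restricted near $\{u=t\}$ is from its rearrangement; bootstrapping this through the coarea formula yields a lower bound for $\mu(t) - \mu^{\mathrm G}(t)$ (the distribution function of $u^{\mathrm G}$) in terms of $\|u - u^\sharp(\cdot+x_0)\|_{L^1}^\theta$, with $\theta = \theta(n)$ absorbing the exponents lost in passing from an $L^1$ control of $u$ to an $L^1$ control of its super-level sets and from pointwise (in $t$) estimates to the integrated one. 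Assembling the two contributions and integrating in $t$ over $[0, \|u\|_\infty]$ gives \eqref{esti_tot}, with $C_1, C_2$ explicit in $n, |\Omega|, f^*$ because every constant entering (the isoperimetric constant, the diameter bound $\mathrm{diam}(\Omega^\sharp)$, the bounds $m \leq f \leq M$ which are encoded in $f^*$) is explicit. The main obstacle I anticipate is the transfer step in the first part: controlling $\alpha(\{u>t\})$ uniformly (in $t$, over a level interval of definite length) by $\alpha(\Omega)$, and doing so with explicit constants — this requires quantitative continuity of the super-level sets of $u$ and is the place where the exponent $3$ is produced; the Hardy–Littlewood part, by contrast, I expect to be more routine once the correct quantitative statement is in hand.
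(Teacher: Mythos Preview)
Your treatment of the asymmetry term $C_1\alpha(\Omega)^3$ is aligned with the paper's: both apply the quantitative isoperimetric inequality to the super-level sets $\{u>t\}$, propagate the asymmetry from $\Omega$ to $\{u>t\}$ on a level interval of controlled length (the paper does this via Lemma~\ref{propasi} and a parameter $s_\Omega$, followed by a two-case split $s_\Omega\gtrless t_1$ in the style of Kim), and integrate. The paper also routes the estimate through an auxiliary symmetric problem $|\nabla g|=F(\omega_n|x|^n)$ built from the pseudo-rearrangement of $f$, but the geometric mechanism you describe is the right one.

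For the second term your plan has a genuine gap. You correctly identify the quantitative P\'olya--Szeg\H o inequality of Cianchi--Esposito--Fusco--Trombetti as the tool that bounds $\inf_{x_0}\normab{u-u^\sharp(\cdot+x_0)}_{L^1}$ in terms of the deficit $E(u)=\norma{\nabla u}_2^2/\norma{\nabla u^\sharp}_2^2-1$ together with the small-gradient measure $M_{u^\sharp}(\delta)$. What is missing is the bridge from the excess $\varepsilon=\normab{u^G}_1-\normab{u}_1$ to $E(u)$. You attribute this variously to the isoperimetric level-set estimates (which control $\alpha(\Omega)$, not $E(u)$) and to a quantitative Hardy--Littlewood bound on $\int_{\{u=t\}}1/f$; but the latter carries information about $f$ versus its rearrangement --- that is precisely what the paper uses for Proposition~\ref{main_prop} on $\normab{f-f_u}_1$, \emph{not} for the $u$ term in Theorem~\ref{main_theorem}. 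The paper's actual bridge is the intermediate inequality (Theorem~\ref{quantitbella})
\[
\normab{u}_1\le\normab{(u^\sharp)^G}_1\le\normab{u^G}_1,
\]
which unpacks to $\int_0^{\abs{\Omega}}\big(|\nabla u|_*(t)-|\nabla u^\sharp|_*(t)\big)\,t^{1/n}\,dt\le\varepsilon$. Writing $|\nabla u|_*^2-|\nabla u^\sharp|_*^2=(|\nabla u|_*+|\nabla u^\sharp|_*)(|\nabla u|_*-|\nabla u^\sharp|_*)\le 2\norma{f}_\infty(|\nabla u|_*-|\nabla u^\sharp|_*)$ and splitting the integral at height $\varepsilon^{1/2}$ then gives $\norma{\nabla u}_2^2-\norma{\nabla u^\sharp}_2^2\le C\varepsilon^{1/2}$, hence $E(u)\le C\varepsilon^{1/2}/m^2$. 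The hypothesis $f\ge m>0$ is used once more to make $M_{u^\sharp}(E(u)^r)$ vanish for small $\varepsilon$. Without this chain from $\varepsilon$ to $E(u)$, your argument for the second term does not close.
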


This result is inspired by \cite[Theorem 1.1]{ABMP}, where the authors prove a quantitative version of the pointwise inequality by Talenti for solutions to the Poisson equation with Dirichlet boundary conditions. As in that result, our inequality shows that the deficit \eqref{deficit} controls both the distance of $\Omega$ from a ball and the distance of $u$ from its symmetrisation, with suitable powers. Whether these powers are optimal remains open. 

The assumptions on \( f \) may appear strong, but they are necessary for the propagation of the asymmetry introduced in \cite{BD,HN}, which is based on controlling the asymmetry of the level sets of \( u \). The variation of asymmetry across these level sets depends on the gradient of \( u \), and hence on \( f \) itself.

Regarding the almost radiality of \( f \), we obtain partial results. We show that the deficit controls the distance between \( f \) and a special rearrangement \( f_u \), which shares the same level sets as \( u \). (See Section \ref{section2} for the precise definition.)

\begin{prop}
    \label{main_prop} 
Let $\Omega$ be a bounded open set of $\R^n$, $n\geq 2$, and let $f\in L^\infty(\Omega)$ be a non-negative function with $$0<m\leq f\leq M.$$  
Let $u$ be solution to \eqref{original_u} and let $u^\text{G}$ be the unique spherically decreasing solution to \eqref{original_v}. Then there exists a positive constant    $C_3:=  C_3(n, \abs{\Omega}, M),$ such that
\begin{equation}\label{f-fu}
    C_3 \normab{f-f_u}_{L^1(\R^n)}^2\leq \normab{u^G}_{L^1(\Omega^{\sharp})}-\normab{u}_{L^1(\Omega)} .
\end{equation}
Moreover, the dependence of $C_3$ on $n$, $\abs{\Omega}$ and $M$ is explicit.
\end{prop}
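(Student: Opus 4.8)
Both Theorem \ref{main_theorem} and Proposition \ref{main_prop} ought to descend from a single ``master'' lower bound for the gap $\normab{u^{\mathrm G}}_{L^1(\Omega^\sharp)}-\normab{u}_{L^1(\Omega)}$ by a sum of nonnegative deficits: the deficit measuring how far $\Omega$ is from a ball (quantitative isoperimetric inequality), the one measuring how far $u$ is from radial (the P\'olya--Szeg\H{o} step), and the one measuring how far $f$ is from being rearranged along the level sets of $u$ (Hardy--Littlewood). Theorem \ref{main_theorem} retains the first two; for Proposition \ref{main_prop} I would retain the third and discard the others, which only strengthens the inequality.

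In more detail: set $\mu(t)=\abs{\{\abs u>t\}}$ and $A(t)=\int_{\{\abs u>t\}}f^{2}\,dx$, so that the coarea formula gives $-\mu'(t)=\int_{\{\abs u=t\}}f^{-1}\,d\mathcal H^{n-1}$ and $-A'(t)=\int_{\{\abs u=t\}}f\,d\mathcal H^{n-1}$. Cauchy--Schwarz on the level set $\{\abs u=t\}$ bounds $P(\{\abs u>t\})^{2}$ by the product of these two quantities; feeding in the isoperimetric inequality $P(\{\abs u>t\})\ge n\omega_n^{1/n}\mu(t)^{1-1/n}$, passing to the generalized inverse $m=\mu(t)$ (so $t=u^*(m)$ and $\widetilde A(m):=A(u^*(m))=\int_{\{\abs u>u^*(m)\}}f^{2}\,dx$), and integrating --- just as in the comparison argument of \cite{GN} used inside the proof of Theorem \ref{main_theorem} --- one reaches an estimate of the form
\[
\normab{u^{\mathrm G}}_{L^1(\Omega^\sharp)}-\normab{u}_{L^1(\Omega)}\ \ge\ c_{1}\int_0^{\abs\Omega}w(\sigma)\,\Bigl(\widetilde A(\sigma)-\int_{\abs\Omega-\sigma}^{\abs\Omega}f^*(\tau)^{2}\,d\tau\Bigr)\,d\sigma,
\]
whose integrand is nonnegative precisely because of the Hardy--Littlewood inequality applied to $\{\abs u>u^*(\sigma)\}$, a set of measure $\sigma$, with $w$ a positive weight produced by the integrations by parts.

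It then remains to read the right-hand side as a multiple of $\normab{f-f_u}_{L^1}^{2}$. On each superlevel set $\{\abs u>t\}$ the restriction of $f$ has a decreasing rearrangement $\rho_t$ with $m\le\rho_t\le f^*\le M$, whence $(f^*)^{2}-\rho_t^{2}=(f^*-\rho_t)(f^*+\rho_t)\ge 2m\,(f^*-\rho_t)$; integrating this in $t$ and an integration by parts bound the ``$f^{2}$-deficit'' above from below by $c_{2}\bigl(\int_0^{\abs\Omega}f^*u^*-\int_\Omega fu\bigr)$, provided $w$ is bounded below by a constant depending only on $n,\abs\Omega,m,M$ --- and establishing that lower bound on $w$ (using only $0<m\le f\le M$ and the value of $\abs\Omega$) is exactly what makes the dependence of $C_3$ explicit. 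Finally the quantitative Hardy--Littlewood inequality, whose arrangement of $f$ adapted to $u$ is precisely $f_u(x)=f^*(\mu(u(x)))$ and which applies because $0\le u\le M(\abs\Omega/\omega_n)^{1/n}$, gives $\int_0^{\abs\Omega}f^*(s)u^*(s)\,ds-\int_\Omega f(x)u(x)\,dx\ \ge\ c(n,\abs\Omega,f^*)\,\normab{f-f_u}_{L^1(\R^n)}^{2}$; concatenating the three estimates yields \eqref{f-fu}.

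The main obstacle is not any individual inequality but the bookkeeping across the integrations by parts: one must verify that all boundary terms vanish at $\sigma=0$ and $\sigma=\abs\Omega$ --- they do, using $\widetilde A(0)=0$, $u^*(\abs\Omega)=0$ and $\widetilde A(\abs\Omega)=\int_0^{\abs\Omega}(f^*)^2$ --- and, crucially, that the weight $w$ multiplying the nonnegative Hardy--Littlewood layer-deficit does not degenerate anywhere on $(0,\abs\Omega)$. It is at that point, and essentially only there, that $m$, $M$ and $\abs\Omega$ enter, thereby pinning down the explicit form of $C_3$.
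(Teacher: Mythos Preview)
Your scheme has two genuine gaps.

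First, the ``master'' lower bound you announce does not come out of the Giarrusso--Nunziante argument. For the first--order equation $|\nabla u|=f$ one has directly
\[
-\frac{d}{dt}\int_{\{u>t\}}|\nabla u|\,dx \;=\; P(\{u>t\}),
\]
so no Cauchy--Schwarz and no $f^{2}$ ever enter; the Hardy--Littlewood step in the comparison is between $F$ (the pseudo-rearrangement of $f$ on the level sets of $u$) and the weight $t^{1/n}$, not between $f$ and $u$. The inequality
\[
\normab{u^{\mathrm G}}_{1}-\normab{u}_{1}\;\ge\;c_{1}\int_{0}^{|\Omega|}w(\sigma)\Bigl(\widetilde A(\sigma)-\int_{|\Omega|-\sigma}^{|\Omega|}(f^{*})^{2}\Bigr)\,d\sigma
\]
is therefore not a reformulation of anything in the proof of Theorem~\ref{main_theorem}, and your steps 1--3 do not actually lead to a bound of $\int f^{*}u^{*}-\int fu$ by the gap $\normab{u^{\mathrm G}}_{1}-\normab{u}_{1}$.

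Second, and more seriously, even if one grants that reduction, your final application of the quantitative Hardy--Littlewood inequality with $h=u$ does \emph{not} apply ``because $0\le u\le M(|\Omega|/\omega_{n})^{1/n}$''. In Theorem~\ref{cianchi_hardy} the hypothesis is $\theta_{p}(s)<\infty$; for $p=1$ this means $\displaystyle\operatorname*{ess\,sup}_{\sigma\in[0,s)}\frac{1}{-(u^{*})'(\sigma)}<\infty$, i.e.\ a \emph{lower} bound on $|\nabla u^{\sharp}|$. No such bound is available: P\'olya--Szeg\H{o} only bounds $|\nabla u^{\sharp}|$ from above, and for a thin domain with $f\equiv 1$ one has $|\nabla u^{\sharp}|$ arbitrarily small near $\partial\Omega^{\sharp}$. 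Hence the constant you obtain from \eqref{cianchi_result} would depend on $u^{*}$, not only on $(n,|\Omega|,f^{*})$, and the explicit $C_{3}$ claimed in Proposition~\ref{main_prop} is lost.

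The paper circumvents both issues at once. In the proof of Theorem~\ref{quantitbella} an auxiliary function $h_{u}$ is constructed having the \emph{same superlevel sets as $u$} but prescribed increasing rearrangement $(h_{u})_{*}(t)=t^{1/n}$; the chain \eqref{perhardy} then reads
\[
n\omega_{n}\,\normab{u}_{1}\;\le\;\int_{\Omega}f\,h_{u}\,dx\;\le\;\int_{0}^{|\Omega|}|\nabla u|_{*}(t)\,t^{1/n}\,dt\;=\;n\omega_{n}\,\normab{u^{\mathrm G}}_{1},
\]
so the Hardy--Littlewood deficit $\int_{0}^{|\Omega|}(h_{u})^{*}f^{*}-\int_{\Omega}h_{u}f$ is bounded by the gap $\varepsilon$. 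One then applies Theorem~\ref{cianchi_hardy} with $g=f$ and $h=h_{u}$ (not $h=u$): since $(h_{u})^{*}(s)=(|\Omega|-s)^{1/n}$ is explicit, $\theta_{1}$ is finite and depends only on $n$ and $|\Omega|$, and because $h_{u}$ and $u$ share level sets one has $f_{h_{u}}=f_{u}$. This yields \eqref{f-fu} in one line with an explicit constant. The missing idea in your proposal is precisely this replacement of $u$ by $h_{u}$.
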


As an intermediate step in proving the almost radiality of $u$ and $f$, we study the behaviour of the Schwarz symmetrisazion with respect to the rearrangement of the gradient. To be more precise we state the following theorem.

\begin{theorem}
\label{quantitbella}
      Let $\Omega$ be a bounded open set of $\R^n$, $n\geq 2$,  and let $u$ be a solution to \eqref{original_u} and $u^\text{G}$ be the unique spherically decreasing solution to \eqref{original_v}. Then 
        \begin{equation}
        \label{sharpG}
            \normab{u}_{L^1(\Omega)}\leq \normab{(u^\sharp)^\text{G}}_{L^1(\Omega^\sharp)} \leq \normab{u^\text{G}}_{L^1(\Omega^\sharp)}.
        \end{equation}
\end{theorem}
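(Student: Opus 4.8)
The plan is to exploit the explicit structure of the solutions to the eikonal-type equations \eqref{original_u} and \eqref{original_v}. First I would recall that, since $|\nabla u| = f$ with $u = 0$ on $\partial\Omega$, the (maximal) solution is given by the optical distance
$$
u(x) = \inf\Set{\int_0^1 f(\gamma(t))\,|\dot\gamma(t)|\,dt \;:\; \gamma(0) = x,\ \gamma(1)\in\partial\Omega},
$$
and analogously $u^{\mathrm G}$ on $\Omega^\sharp$ is radial and can be written explicitly via the one-dimensional integral of $f_\sharp$ along radii. The first inequality in \eqref{sharpG}, $\normab{u}_{L^1(\Omega)} \le \normab{(u^\sharp)^{\mathrm G}}_{L^1(\Omega^\sharp)}$, should follow from the Hardy–Littlewood inequality together with the fact that $\Omega^\sharp$ has the same measure as $\Omega$, so that $\normab{u}_{L^1(\Omega)} = \normab{u^\sharp}_{L^1(\Omega^\sharp)}$; hence it suffices to show $\normab{u^\sharp}_{L^1(\Omega^\sharp)} \le \normab{(u^\sharp)^{\mathrm G}}_{L^1(\Omega^\sharp)}$, i.e.\ that replacing $u^\sharp$ (the radial decreasing rearrangement, viewed as a function on the ball) by the solution $(u^\sharp)^{\mathrm G}$ of the symmetrized eikonal equation with datum $(f)_\sharp$ only increases the $L^1$ norm. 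This is essentially the content of \eqref{GN} applied on the ball $\Omega^\sharp$ itself: $u^\sharp$ is an admissible competitor whose gradient's rearrangement is controlled, and $(u^\sharp)^{\mathrm G}$ is the extremal radial profile.

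For the second inequality, $\normab{(u^\sharp)^{\mathrm G}}_{L^1(\Omega^\sharp)} \le \normab{u^{\mathrm G}}_{L^1(\Omega^\sharp)}$, I would compare the two radial ODE solutions on $\Omega^\sharp$ pointwise. Both $(u^\sharp)^{\mathrm G}$ and $u^{\mathrm G}$ are radial; writing $R$ for the radius of $\Omega^\sharp$ and using that for a radial solution $w(x) = \psi(|x|)$ of $|\nabla w| = g(|x|)$ with $w|_{\partial\Omega^\sharp} = 0$ one has $\psi(\rho) = \int_\rho^R g(s)\,ds$, the question reduces to comparing $\int_\rho^R \big[(f)_\sharp\big](s)\,ds$ against $\int_\rho^R f_\sharp(s)\,ds$, where $(f)_\sharp$ denotes the radial increasing datum built from $f$ restricted appropriately and $f_\sharp$ is the increasing rearrangement of $f$ on all of $\Omega$. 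The key point is a truncation/rearrangement comparison: on each sublevel set $\Set{u > t}$, the constraint on $f$ imposed by the coarea/eikonal structure forces $(f)_\sharp$ to be, in an integrated sense along radii, no larger than $f_\sharp$; integrating in $\rho$ and then in the measure variable gives the claimed $L^1$ bound. I would make this precise by passing to the distribution-function/pseudo-rearrangement variables, exactly in the spirit of the comparison argument of \cite{GN}.

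Concretely, the cleanest route is: (i) express all three $L^1$ norms as one-dimensional integrals in the variable $\tau \in (0,|\Omega|)$ using the layer-cake formula and the coarea formula, turning $\normab{u}_{L^1}$ into $\int_0^{|\Omega|}\!\int_\tau^{|\Omega|} \frac{d\sigma}{\text{(perimeter factor)}\, f\text{-term}}\,d\tau$ and similarly for the two symmetrized solutions with the isoperimetric (ball) perimeter factor and with the appropriate rearrangement of $f$; (ii) apply the isoperimetric inequality to pass from $u$ to $(u^\sharp)^{\mathrm G}$, which replaces $\mathrm{Per}(\Set{u>t})$ by the perimeter of the ball of the same volume and $f$ by the rearrangement of $f$ matched to the level sets of $u$ — this gives the first inequality; (iii) apply the Hardy–Littlewood inequality in the $f$-variable to pass from the level-set-matched rearrangement of $f$ to its full increasing rearrangement $f_\sharp$, which only decreases the integrand $1/f$, hence increases the norm — this gives the second inequality.

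The main obstacle I anticipate is step (ii)/(iii): being careful about \emph{which} rearrangement of $f$ appears at each stage (the "$f_u$"-type rearrangement matched to the level sets of $u$, as opposed to the genuine increasing rearrangement $f_\sharp$ of $f$ on $\Omega$), and verifying that the intermediate object $(u^\sharp)^{\mathrm G}$ — the radial eikonal solution built from the level-set data of $u$ — is well-defined and that its $L^1$ norm genuinely interpolates between the two. In particular one must check that the monotonicity $g \mapsto \int \big(\int_\rho^R g\big)\,d\rho$ respects the two successive replacements of the datum, which hinges on the fact that both replacements move the datum toward its increasing rearrangement (Hardy–Littlewood), and that the perimeter replacement is favorable (isoperimetric inequality). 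Once the bookkeeping of rearrangements is set up correctly, each inequality in \eqref{sharpG} reduces to one of these two classical inequalities applied to a one-dimensional integral.
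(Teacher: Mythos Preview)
Your treatment of the first inequality is fine and matches the paper: it is \eqref{GN} applied to $u^\sharp$ on $\Omega^\sharp$, together with $\|u\|_1=\|u^\sharp\|_1$.

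The gap is in the second inequality. You identify the datum of $(u^\sharp)^{\mathrm G}$ with ``the rearrangement of $f$ matched to the level sets of $u$'' (a pseudo-rearrangement of $f$), but this is not what $(u^\sharp)^{\mathrm G}$ is. By definition, $(u^\sharp)^{\mathrm G}$ is the radial solution on $\Omega^\sharp$ with datum $(|\nabla u^\sharp|)_\sharp$, the increasing rearrangement of the gradient of the Schwartz rearrangement of $u$. The function $|\nabla u^\sharp|$ is \emph{not} equimeasurable with $f=|\nabla u|$ (P\'olya--Szeg\H o gives only $L^p$ inequalities, not equality of distribution functions), so your step~(iii), ``Hardy--Littlewood in the $f$-variable to pass from the level-set-matched rearrangement of $f$ to $f_\sharp$'', does not connect $(u^\sharp)^{\mathrm G}$ to $u^{\mathrm G}$. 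What your route (i)--(iii) actually proves is the chain $\|u\|_1\le\|g\|_1\le\|u^{\mathrm G}\|_1$ with $g$ the pseudo-rearrangement solution of Section~\ref{section3}; that is a different intermediate object.

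The paper reduces the second inequality, via the explicit radial formula, to
\[
\int_0^{|\Omega|}|\nabla u^\sharp|_*(t)\,t^{1/n}\,dt\le\int_0^{|\Omega|}|\nabla u|_*(t)\,t^{1/n}\,dt,
\]
a weighted comparison of two genuinely different distributions. The key step you are missing is the construction of a function $h$ on $\Omega^\sharp$ with rearrangement $t^{1/n}$ and super-level sets (balls or annuli) chosen so that Hardy--Littlewood holds with \emph{equality} for the pair $(|\nabla u^\sharp|,h)$. Coarea then writes the left side as an integral over level sets of $u^\sharp$; the isoperimetric inequality replaces $\mathcal H^{n-1}(\{u^\sharp=s\})$ by $\mathcal H^{n-1}(\{u=s\})$; coarea back gives $\int_\Omega|\nabla u|\,h_u$ for an $h_u$ equimeasurable with $h$; and a final Hardy--Littlewood (now as an inequality) yields the right side.
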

This result can be interpreted as a further refinement of inequality \eqref{GN},
since we can bound $\normab{u^\text{G}}_{L^1(\Omega)}-\normab{u}_{L^1(\Omega)}$ from below by a positive quantity.

\vspace{1mm}

For the sake of completeness, we recall that this result was later extended to more general equations involving additional terms of the form \( f(x) - \lambda u \) (see \cite{GN2}). Beyond the previously mentioned works, further related problems have been investigated in the literature. In particular, in \cite{AG,AGNT}, the results by Giarrusso and Nunziante were generalized to include functions with non-zero trace, as well as functions in the space of bounded variation (\(BV\)). Another important development was the analysis of the non-stationary case carried out in \cite{FPV}, which is of physical relevance since this is the form in which Hamilton–Jacobi equations typically appear in the literature or arise in applications, such as in Hamiltonian mechanics. Moreover, Talenti himself established analogous estimates in \cite{Ta6}, expressed in terms of the Lorentz norm \(L^{q,1}\).

The paper is organised as follows: in Section \ref{section2} we introduce some notation and preliminary results; Sections \ref{section3}, \ref{section4} and \ref{section5} are dedicated to establishing the boundedness of each term on the right-hand side of \eqref{esti_tot} and \eqref{f-fu}.

\color{black}

\section{Notation and preliminaries}
\label{section2}

Throughout the paper, \( |\Omega| \) denotes the Lebesgue measure of a measurable set \( \Omega \subset \mathbb{R}^n \), and \( P(E, \Omega) \) denotes the perimeter of a measurable set \( E \subset \mathbb{R}^n \) in \( \Omega \), defined by
\[
P(E, \Omega) := \sup \left\{ \int_E \mathrm{div}\, \varphi \, dx : \varphi \in C_c^\infty(\Omega),\ \|\varphi\|_\infty \le 1 \right\}.
\]
We write \( P(E) := P(E, \mathbb{R}^n) \) for the total perimeter of \( E \).

For every function $f\in W^{1,1}_{\text{loc}}(\Omega)$ and  $u:\Omega\to\R$ be a measurable function, the \textbf{coarea formula} holds, see for instance \cite{AFP,Fleming_Rishel,maggi2012sets}.

\begin{theorem}[Coarea formula]
	Let $\Omega \subset \mathbb{R}^n$ be an open set. Let $f\in W^{1,1}_{\text{loc}}(\Omega)$ and let $u:\Omega\to\R$ be a measurable function. Then,
	\begin{equation}
        	\label{coarea}
		{\displaystyle \int _{\Omega}u(x)|\nabla f(x)|dx=\int _{\mathbb {R} }dt\int_{\Omega\cap f^{-1}(t)}u(y)\, d\mathcal {H}^{n-1}(y)}.
	\end{equation}

Moreover, if  $f \in \text{BV}(\Omega)$, then the Fleming-Rishel formula holds, i.e.
\begin{equation}
	\label{flemingrishel}
	\abs{Df}(\Omega) = \int_{-\infty}^{+\infty} P(\Set{ f>t},\Omega ) \, dt.
\end{equation}
\end{theorem}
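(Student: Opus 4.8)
The plan is to derive both \eqref{coarea} and \eqref{flemingrishel} from Federer's coarea formula for Lipschitz maps, which is the true analytic core; since the statement is classical I will only lay out the reduction steps and point to \cite{AFP,Fleming_Rishel,maggi2012sets} for the technical details. First I would reduce \eqref{coarea} to the case of indicator functions: writing $u=u^+-u^-$ it suffices to assume $u\ge 0$, and approximating $u$ from below by simple functions and invoking the monotone convergence theorem on the left-hand integral and on the (monotone in $u$) right-hand level-set integrals, it is enough to prove \eqref{coarea} for $u=\chi_A$ with $A\subset\Omega$ Borel; equivalently, to show that the two Borel measures $A\mapsto\int_A|\nabla f|\,dx$ and $A\mapsto\int_{\R}\mathcal H^{n-1}(A\cap f^{-1}(t))\,dt$ agree on $\Omega$.

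Next I would prove the identity $\int_A|\nabla f|\,dx=\int_{\R}\mathcal H^{n-1}(A\cap f^{-1}(t))\,dt$ for $f$ Lipschitz. For $f\in C^1$ this is the content of Sard's theorem — for a.e.\ $t$ the level set $f^{-1}(t)$ is a $C^1$ hypersurface — combined with the area formula with Jacobian $|\nabla f|$, the set $\{\nabla f=0\}$ contributing nothing to either side; the Lipschitz case then follows from Rademacher's theorem and a $C^1$-approximation argument on $\{|\nabla f|\neq 0\}$. To pass from Lipschitz to $f\in W^{1,1}_{\mathrm{loc}}(\Omega)$, on a relatively compact open $\Omega'\Subset\Omega$ I would approximate $f$ by Lipschitz functions $f_k\to f$ in $W^{1,1}(\Omega')$ with $f_k=f$ off sets $E_k$ of vanishing measure and with $\int_{E_k}|\nabla f|\to 0$ (the standard Lusin-type truncation via the maximal function of $\nabla f$), apply the Lipschitz case to each $f_k$, and pass to the limit, the only care being the control of the $\mathcal H^{n-1}$-content of level sets over the bad sets; exhausting $\Omega$ by such $\Omega'$ gives \eqref{coarea} in general.

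For \eqref{flemingrishel} in the Sobolev case it suffices to take $u\equiv 1$ in \eqref{coarea} and use that for a.e.\ $t$ the superlevel set $\{f>t\}$ has finite perimeter in $\Omega$ with $P(\{f>t\},\Omega)=\mathcal H^{n-1}(\Omega\cap f^{-1}(t))$, i.e.\ for a.e.\ $t$ the topological level set coincides $\mathcal H^{n-1}$-a.e.\ with the reduced boundary of $\{f>t\}$. For the genuinely $BV$ statement the two inequalities are proved separately: ``$\le$'' follows from the layer-cake representation $f=\int_0^{\infty}\chi_{\{f>t\}}\,dt-\int_{-\infty}^0\big(1-\chi_{\{f>t\}}\big)\,dt$, Fubini, and the duality definition of the total variation (testing against $\varphi\in C^1_c(\Omega;\R^n)$ with $|\varphi|\le 1$ and using $\int_\Omega\chi_{\{f>t\}}\,\mathrm{div}\,\varphi\,dx\le P(\{f>t\},\Omega)$); ``$\ge$'' follows by mollifying $f$ to get smooth $f_k\to f$ in $L^1(\Omega)$ with $|Df_k|(\Omega)\to|Df|(\Omega)$, applying the smooth coarea formula to each $f_k$, and combining the lower semicontinuity of the perimeter (in $t$) with Fatou's lemma.

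The main obstacle is the GMT heart of the argument, namely Federer's coarea formula for Lipschitz maps together with the Lipschitz-to-Sobolev passage, where one must guarantee that the level-set measures $\mathcal H^{n-1}(A\cap f_k^{-1}(t))$ behave well in the limit; the identification, for a.e.\ $t$, of the level set $f^{-1}(t)$ with the reduced boundary $\partial^*\{f>t\}$, needed to turn \eqref{coarea} into \eqref{flemingrishel}, is the other delicate point. For the purposes of this paper these facts are simply quoted from \cite{AFP,Fleming_Rishel,maggi2012sets}.
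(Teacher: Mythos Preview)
Your proposal is a correct outline of the classical proof, but note that the paper does not prove this theorem at all: it is stated in the preliminary Section~\ref{section2} as a known result, with references to \cite{AFP,Fleming_Rishel,maggi2012sets}, and is used only as a tool in the subsequent arguments. So there is no ``paper's own proof'' to compare against; your sketch simply supplies what the paper deliberately omits, and your closing remark that these facts are quoted from the same references is exactly in line with the paper's treatment.
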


In the following, we discuss rearrangements of functions, emphasizing both their qualitative and quantitative aspects. For convenience in the proofs, we also outline several rescaling properties.

\subsection{Distribution function and rearrangements}
In this subsection, we introduce several definitions and properties related to function rearrangements. For a general overview of the basic results in this area, see, for instance, \cite{K}.
\begin{definizione}
    Let \( g \) be a non-negative measurable function on \( \Omega \subset \mathbb{R}^n \). The \textbf{distribution function} of \( g \) is defined as
\[
\mu_g(t) := |\{ x \in \Omega : g(x) > t \}|, \qquad t \ge 0.
\]
\end{definizione}

\begin{definizione} \label{decreasing:rear}
	Let $g: \Omega \to \R$ be a measurable function, the \textbf{decreasing rearrangement} of $g$, denoted by $g^\ast(\cdot)$, is defined as
$$g^*(s)=\inf\{t\geq 0:\mu_g(t)<s\}.$$
 and the \textbf{increasing rearrangement} of $g$, denoted by $g_\ast$, is defined as
    \begin{equation*}
        g_* \colon [0,\abs{\Omega}] \to \R^+ \qquad g_*(s) = g^*(\abs{\Omega}-s)
    \end{equation*}
	\end{definizione}

\begin{definizione}
The \textbf{spherically decreasing rearrangement} (or \textbf{Schwarz symmetrisation}) of \( g \) is the function \( g^\sharp : \mathbb{R}^n \to \mathbb{R} \) whose superlevel sets are centred balls having the same measure as those of \( g \). Equivalently,
\[
g^\sharp(x) := g^*(\omega_n |x|^n),
\]
where \( \omega_n \) denotes the measure of the unit ball in \( \mathbb{R}^n \).

The \textbf{spherically increasing rearrangement} of \( g \) is the function \( g_\sharp : \mathbb{R}^n \to \mathbb{R} \) whose superlevel sets are centred annuli having the same measure as those of \( g \). Equivalently,
\[
g_\sharp(x) := g_*(\omega_n |x|^n).
\]
\end{definizione}

These rearrangements are all \emph{equimeasurable}, i.e.\ they preserve the distribution function of \( g \), hence, by the Cavalieri principle, it holds
$$ \displaystyle{\norma{g}_{L^p(\Omega)}=\norma{g^*}_{L^p[0,\abs{\Omega}]}=\lVert{g^\sharp}\rVert_{L^p(\Omega^\sharp)}=\norma{g_*}_{L^p[0,\abs{\Omega}]}=\lVert{g_\sharp}\rVert_{L^p(\Omega^\sharp)}}, \quad \text{for all } p\ge1.$$

We summarize below some classical results on function rearrangements: the condition ensuring the absolute continuity of $\mu_g$ and two fundamental inequalities (see, for instance, \cite{BZ,CF2,HLP,K,PS}).

\begin{lemma}
    \label{asscontmu}
    Let $g\in W^{1,p}(\R^n)$, with $p\in(1,+\infty)$. 
    The following results hold:
    \begin{enumerate}
        \item \textbf{(Absolute continuity of $\mu_g$)}  
        the distribution function $\mu_g$ of $g$ is absolutely continuous if and only if 
\begin{equation*}
    \abs{\{0<g^\sharp<  ||g||_\infty\}\cap \{|\nabla g^\sharp| =0\}}=0.
\end{equation*}

        \item \textbf{(Hardy--Littlewood inequality)}  
        if $h \in L^{p'}(\Omega)$, then 
        \begin{equation}\label{hardy_littlewood}
 \int_{\Omega} \abs{h(x)g(x)} \, dx \le \int_{0}^{\abs{\Omega}} h^*(s) g^*(s) \, ds= \int_{0}^{\abs{\Omega}} h_*(s) g_*(s) \, ds.
\end{equation}

        \item \textbf{(P\'olya--Szegő inequality)}  
   if $g \in W^{1,p}_0(\Omega)$, then $g^{\sharp} \in W^{1,p}_0(\Omega^\sharp)$ and
	\begin{equation*}
		\lVert \nabla g^{\sharp} \rVert_{p} \leq \norma{\nabla g}_{p}.
	\end{equation*}The inequality also holds for $p = \infty$, that is,
        \begin{equation*}
            \|\nabla g^{\sharp}\|_{\infty} \le \|\nabla g\|_{\infty}.
        \end{equation*}
    \end{enumerate}
\end{lemma}

\subsection{Pseudo-rearrangements}

We now introduce the notion of pseudo-rearrangements, which generalizes classical rearrangements and was first studied in \cite{AT}.  
These constructions are useful when one wants to compare integrals over level sets of different functions.
\begin{definizione}[Pseudo-rearrangement]\label{pseudo}
    Let $g$ be a function in $L^p(\Omega)$, $\forall s  \in [0, |\Omega|]$ we can find a subset $D(s)$ such that:
    \begin{enumerate}
        \item $|D(s)|=s$;
        \item  $s_1 \leq s_2
        \implies D(s_1)\subset D(s_2)$;
        \item $D(s)= \{x \in \Omega : \, |g(x)| >t\}$ if $s = \mu_g(t)$.
    \end{enumerate}
    Then, for any $f\in L^p(\Omega)$, the function \[\int_{D(s)}f(x)\,dx\] is an absolutely continuous function of $s$, so there exists a function $F$, that we call \textbf{pseudo-rearrangement} of $f$ with respect to $g$, such that 
    \begin{equation*}
       \int_0^sF(t)\,dt=\int_{D(s)}f(x)\,dx.
    \end{equation*}
    
\end{definizione}

The following lemma describes a fundamental property of this rearrangement.
  \begin{lemma}[{\cite[Lemma 2.2]{AT}}]\label{pseudorearr}
        Let $f\in L^p(\Omega),p>1$. There exists a sequence $\{F_k(s)\}$ of functions which have the same rearrangement $f^*$ of $f$, such that \[F_k(s)\rightharpoonup F(s)\quad\text{in }L^p(0,|\Omega|)\]
        If $f\in L^1(\Omega)$, it follows that: \[\lim_k\int_0^{|\Omega|}F_k(s)g(s)\,ds=\int_0^{|\Omega|}F(s)g(s)\,ds\] for every function $g$ belonging to the space $BV(0,|\Omega|)$.
    \end{lemma}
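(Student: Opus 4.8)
The plan is to realize the sequence $\{F_k\}$ by a piecewise rearrangement of $f$ along the family $\{D(s)\}$ associated with $u$. Fix $k\in\N$ and partition $[0,\abs{\Omega}]$ into the consecutive intervals $I_j^k:=((j-1)\abs{\Omega}/k,\, j\abs{\Omega}/k]$, $j=1,\dots,k$, and set $A_j^k:=D(j\abs{\Omega}/k)\setminus D((j-1)\abs{\Omega}/k)$. By the monotonicity of $s\mapsto D(s)$ and the property $\abs{D(s)}=s$, the sets $A_1^k,\dots,A_k^k$ are pairwise disjoint, cover $\Omega$ up to a null set, and $\abs{A_j^k}=\abs{\Omega}/k$. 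On $I_j^k$ define $F_k$ to be a translate of the decreasing rearrangement of $f\chi_{A_j^k}$, so that $\abs{\{s\in I_j^k:F_k(s)>t\}}=\abs{\{x\in A_j^k:f(x)>t\}}$ for every $t$. Summing over $j$ shows that $F_k$ is equidistributed with $f$, hence $F_k^\ast=f^\ast$; in particular $\norma{F_k}_{L^p(0,\abs{\Omega})}=\norma{f}_{L^p(\Omega)}$ and $\int_{I_j^k}F_k=\int_{A_j^k}f$ for every $j$.

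The key observation is that, at every node of the partition,
\[\int_0^{j\abs{\Omega}/k}F_k(s)\,ds=\sum_{i=1}^j\int_{A_i^k}f=\int_{D(j\abs{\Omega}/k)}f(x)\,dx=\int_0^{j\abs{\Omega}/k}F(s)\,ds,\]
the last equality being the defining property of $F$. Writing $\Phi_k(\sigma):=\int_0^\sigma F_k$ and $\Phi(\sigma):=\int_0^\sigma F$, both functions are absolutely continuous on $[0,\abs{\Omega}]$ — for $\Phi$ this is precisely the absolute continuity of $s\mapsto\int_{D(s)}f$ recalled before the statement, which also gives $F\in L^1(0,\abs{\Omega})$ — they coincide at the nodes, and for $\sigma\in I_j^k$,
\[\abs{\Phi_k(\sigma)-\Phi(\sigma)}\le\int_{I_j^k}\abs{F_k}+\int_{I_j^k}\abs{F}=\int_{A_j^k}f+\int_{I_j^k}\abs{F}.\]
Since $\abs{A_j^k}=\abs{I_j^k}=\abs{\Omega}/k\to 0$ and $f\in L^1(\Omega)$, $F\in L^1(0,\abs{\Omega})$, absolute continuity of the Lebesgue integral forces the right-hand side to $0$ uniformly in $j$; hence $\Phi_k\to\Phi$ uniformly on $[0,\abs{\Omega}]$.

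From this the two assertions follow. If $p>1$, the sequence $\{F_k\}$ is bounded in the reflexive space $L^p(0,\abs{\Omega})$, so every subsequence admits a further subsequence converging weakly to some $G$; testing against $\chi_{[0,\sigma]}\in L^{p'}(0,\abs{\Omega})$ and using $\Phi_k\to\Phi$ one gets $\int_0^\sigma G=\int_0^\sigma F$ for all $\sigma$, whence $G=F$ almost everywhere. Since the weak limit of every weakly convergent subsequence is forced to be $F$, the whole sequence satisfies $F_k\rightharpoonup F$ in $L^p(0,\abs{\Omega})$. For the $L^1$ statement, let $g\in BV(0,\abs{\Omega})$; integrating by parts (using that $\Phi_k$ is absolutely continuous with $\Phi_k'=F_k$ and $\Phi_k(0)=0$, and that $g$ has bounded variation) gives
\[\int_0^{\abs{\Omega}}F_k(s)g(s)\,ds=\Phi_k(\abs{\Omega})\,g(\abs{\Omega}^-)-\int_{[0,\abs{\Omega})}\Phi_k\,dg,\]
and the analogous identity with $(\Phi,F)$ in place of $(\Phi_k,F_k)$. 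Now $\Phi_k(\abs{\Omega})=\int_\Omega f=\Phi(\abs{\Omega})$, while $\Phi_k\to\Phi$ uniformly and $Dg$ is a finite measure, so $\int_{[0,\abs{\Omega})}\Phi_k\,dg\to\int_{[0,\abs{\Omega})}\Phi\,dg$ by dominated convergence with respect to $\abs{Dg}$. Therefore $\int_0^{\abs{\Omega}}F_kg\to\int_0^{\abs{\Omega}}Fg$, which is the claim.

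The only genuinely delicate step is the uniform convergence $\Phi_k\to\Phi$: it rests on the absolute continuity of $s\mapsto\int_{D(s)}f$ (so that $F\in L^1$) together with the identity $\int_{I_j^k}F_k=\int_{A_j^k}f$ built into the construction. Once this is in hand, the reflexivity argument for $p>1$ and the $BV$ integration by parts for $p=1$ are routine; minor technical care is only needed in fixing the one-sided representative of $g$ in the integration-by-parts formula and in checking the measurability of the piecewise-defined $F_k$.
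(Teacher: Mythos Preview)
The paper does not prove this lemma: it is quoted verbatim as \cite[Lemma~2.2]{AT} and used as a black box, so there is no in-paper argument to compare against. Your construction is precisely the classical Alvino--Trombetti one --- piecewise rearrangement of $f$ on the annuli $A_j^k=D(j\abs{\Omega}/k)\setminus D((j-1)\abs{\Omega}/k)$, identification of the primitives at the nodes, uniform convergence of the primitives via equi-integrability, then reflexivity for $p>1$ and integration by parts against $Dg$ for $g\in BV$ --- and it is correct. One cosmetic slip: in the line $\int_{I_j^k}\abs{F_k}=\int_{A_j^k}f$ you need $\int_{A_j^k}\abs{f}$ on the right if $f$ is allowed to change sign; the subsequent uniform-smallness argument is unaffected.
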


In the special case where $g^*$ is strictly decreasing on $(0,|\Omega|)$, the rearrangement can be constructed as in \cite{CF}, where further details are provided.

\begin{definizione}
    Assume that $g^\ast$ is strictly decreasing in $(0,\abs{\Omega})$. We define $f_g:\Omega \to [0,+\infty)$ as
    $$f_g(x)= f^\ast(\mu_g(g(x))) \qquad\forall x \in \Omega.$$
\end{definizione}
For every $t > 0$, one easily verifies that
\begin{equation}
    \label{equalityh-lprel}
    \Set{x \in \Omega : \, f_g(x) >t}= \Set{x \in \Omega: \abs{g(x)} > g^\ast(\mu_f(t))},
\end{equation}
up to a set of measure zero, and that
$$(f_g)^\ast(s)= f^\ast(s) \qquad a.e. \,  s \in (0, \abs{\Omega}).$$

\begin{oss}
\label{remhardi=}
    Recalling that equality in the Hardy–Littlewood inequality holds when condition \eqref{equalityh-lprel} is satisfied, we obtain:
\begin{equation*}
        \int_\Omega f_g(x)\abs{g(x)}\, dt = \int_0^{\abs{\Omega}} f^\ast(s)g^\ast(s)\, ds=\int_0^{\abs{\Omega}} f_\ast(s)g_\ast(s)\, ds.
    \end{equation*}
\end{oss}

\subsection{Rearrangement of the gradient}

This section recalls classical results on the rearrangement of the gradient, which are instrumental in deriving quantitative versions of \eqref{GN}.
\begin{theorem}
    Let $\Omega $ be a bounded open set of $\R^n$, $n\geq 2$, and let $\Omega^{\sharp}$ be the centered ball.

    Let $f \colon \Omega \times \R\to \R$ be a measurable function with $f(x,u) \leq f(x,0) \,\, \forall(x,u) \in \Omega \times \R$, and let $f_\sharp(x,0)$ be the increasing rearrangement of $f(x,0)$. Let $H \colon \R^n \to \R$ be a measurable, non-negative function and let $K \colon [0,+\infty) \to [0,+\infty)$ be a strictly increasing real-valued function such that
    \[
        0 \leq K(\abs{y}) \leq H(y) \qquad \forall y \in \R^n \qquad \text{ and } K^{-1}(f(x,0)) \in L^p(\Omega),\quad p\geq1.
    \]
    Let $v \in W_0^{1,p}(\Omega)$ be a  solution to
    \[
        \begin{cases}
		H(\nabla v) = f(x,u) &\text{a.e. in }\Omega \\
		v = 0 &\text{on } \partial \Omega
	\end{cases}
        ,
    \]
  and let $z \in W_0^{1,p}(\Omega^{\sharp})$ be the unique spherically decreasing solution to
    \[
	\begin{cases}
		K(\abs{\nabla z}) = f_{\sharp}(x,0) & \text{a.e. in } \Omega^{\sharp} \\
		z = 0 & \text{on } \partial \Omega^{\sharp}
	\end{cases}
        ,
    \]
    then, it holds
    \begin{equation*}
	\norma{v}_{L^1(\Omega)} \leq \norma{z}_{L^1(\Omega^{\sharp})}.
    \end{equation*}
\end{theorem}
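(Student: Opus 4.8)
The plan is a Talenti‑type comparison argument built on the decreasing rearrangement of $v$ and on the pseudo‑rearrangement, relative to $v$, of the function $g:=K^{-1}(f)$. Throughout I write $\mu(t):=\abs{\{\abs{v}>t\}}$ for the distribution function of $\abs{v}$, $v^\ast$ for its decreasing rearrangement (so that $\normab{v}_{L^1(\Omega)}=\int_0^{\abs{\Omega}}v^\ast(s)\,ds$), $P^\sharp(s):=n\omega_n^{1/n}s^{(n-1)/n}$ for the perimeter of the ball of measure $s$, and $(\cdot)_\ast$ for the increasing rearrangement on $(0,\abs{\Omega})$. Two preliminary observations: $K(\abs{\nabla v})\le H(\nabla v)=f$ together with the strict monotonicity of $K$ forces $\abs{\nabla v}\le K^{-1}(f)=g$ a.e.\ in $\Omega$; and, since $K^{-1}$ is increasing it commutes with monotone rearrangements, so $K(\abs{\nabla z})=f_\sharp$ gives $\abs{\nabla z}=K^{-1}(f_\sharp)=g_\sharp$, the increasing spherical rearrangement of $g$.

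The main step is the pointwise bound
\[
-(v^\ast)'(s)\ \le\ \frac{F(s)}{P^\sharp(s)}\qquad\text{for a.e.\ }s\in(0,\abs{\Omega}),
\]
where $F$ is the pseudo‑rearrangement of $g$ with respect to $v$, i.e.\ $\int_0^s F(\sigma)\,d\sigma=\int_{D(s)}g\,dx$ with $D(s)=\{\abs{v}>v^\ast(s)\}$ of measure $s$. To obtain it, fix a.e.\ a level $t=v^\ast(s)$; applying the coarea formula \eqref{coarea} to $\int_{\{\abs{v}>t\}}g\,dx$ gives $F(s)=(-(v^\ast)'(s))\,\rho(t)$ with $\rho(t):=-\tfrac{d}{dt}\int_{\{\abs{v}>t\}}g\,dx\ \ge\ \int_{\{\abs{v}=t\}}g\,\abs{\nabla v}^{-1}\,d\mathcal H^{n-1}$, while the Cauchy--Schwarz inequality applied to $1=\sqrt{g\,\abs{\nabla v}^{-1}}\cdot\sqrt{\abs{\nabla v}\,g^{-1}}$ on $\{\abs{v}=t\}$, combined with $\abs{\nabla v}\,g^{-1}\le1$, yields $\int_{\{\abs{v}=t\}}g\,\abs{\nabla v}^{-1}\,d\mathcal H^{n-1}\ge P(\{\abs{v}>t\})$; the isoperimetric inequality then bounds this perimeter below by $P^\sharp(\mu(t))$. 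Chaining these, $F(s)\ge P^\sharp(s)\,(-(v^\ast)'(s))$, where in the last step one uses $\mu(v^\ast(s))=s$ at a.e.\ $s$ (the asserted inequality being trivial at points where $(v^\ast)'(s)=0$). When $\mu$ fails to be absolutely continuous, the only legitimate object is the pseudo‑rearrangement itself, which is absolutely continuous in $s$ by construction; cf.\ Lemmas \ref{asscontmu} and \ref{pseudorearr}.

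On the other side, $z$ being spherically symmetric and decreasing with $\abs{\nabla z}=g_\sharp$, its decreasing rearrangement satisfies the \emph{equality} $-(z^\ast)'(s)=g_\ast(s)/P^\sharp(s)$. Integrating the two relations from $s$ to $\abs{\Omega}$ (where $v^\ast$ and $z^\ast$ vanish) and then over $s$, Fubini gives
\[
\normab{v}_{L^1(\Omega)}\ \le\ \int_0^{\abs{\Omega}}\frac{s}{P^\sharp(s)}\,F(s)\,ds,\qquad\qquad \normab{z}_{L^1(\Omega^\sharp)}\ =\ \int_0^{\abs{\Omega}}\frac{s}{P^\sharp(s)}\,g_\ast(s)\,ds,
\]
so it suffices to prove $\int_0^{\abs{\Omega}}\tfrac{s}{P^\sharp(s)}F(s)\,ds\le\int_0^{\abs{\Omega}}\tfrac{s}{P^\sharp(s)}g_\ast(s)\,ds$. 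By the Hardy--Littlewood inequality, $\int_0^sF=\int_{D(s)}g\ge\int_0^sg_\ast$ for every $s$ (the right side being the minimum of $\int_Ag$ over sets of measure $s$), with equality at $s=\abs{\Omega}$ since $D(\abs{\Omega})=\Omega$. Since $s\mapsto s/P^\sharp(s)=\tfrac{1}{n\omega_n^{1/n}}\,s^{1/n}$ is nonnegative and nondecreasing, one integration by parts — both boundary terms vanishing, thanks to $\int_0^{\abs{\Omega}}F=\int_0^{\abs{\Omega}}g_\ast$ and to the primitives being zero at $0$ — turns this majorization into the desired inequality $\normab{v}_{L^1(\Omega)}\le\normab{z}_{L^1(\Omega^\sharp)}$.

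The crux, and the step where the analysis is most delicate, is the differential inequality of the second paragraph: it is where the PDE enters (through $\abs{\nabla v}\le K^{-1}(f)$), where the geometry enters (coarea, Cauchy--Schwarz on level sets, the isoperimetric inequality), and where the regularity technicalities live — absolute continuity of $\mu$, the behaviour on the critical set $\{\abs{\nabla v}=0\}$, and the need to phrase everything through the pseudo‑rearrangement (as in \cite{AT}) rather than through $\tfrac{d}{dt}\int_{\{\abs{v}>t\}}$ directly. By contrast, the concluding rearrangement estimate is an elementary one‑dimensional manipulation.
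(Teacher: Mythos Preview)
The paper does not contain its own proof of this theorem: Theorem~\ref{Giarrusso_Nunziante} is merely recalled from \cite[Theorem~2.2]{GN} as background, so there is no in-paper argument to compare against. That said, your proposal is a correct reconstruction of the classical Giarrusso--Nunziante/Alvino--Trombetti scheme, and it is entirely consistent with the tools the present paper deploys elsewhere (coarea and Fleming--Rishel, the isoperimetric inequality on level sets, the pseudo-rearrangement of Lemma~\ref{pseudorearr}, and the Hardy--Littlewood inequality for the final one-dimensional step); compare the computations in Section~\ref{section3} and in the proof of Theorem~\ref{quantitbella}.

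Two small remarks. First, your Cauchy--Schwarz detour is unnecessary: since $\abs{\nabla v}\le g$ a.e., one has $g\,\abs{\nabla v}^{-1}\ge 1$ pointwise on the level set, so $\int_{\{\abs{v}=t\}}g\,\abs{\nabla v}^{-1}\,d\mathcal H^{n-1}\ge \mathcal H^{n-1}(\{\abs{v}=t\})\ge P^\sharp(\mu(t))$ directly. Second, in the chain of identities leading to $F(s)=(-(v^\ast)'(s))\,\rho(t)$ you are implicitly assuming $\mu$ is absolutely continuous and that $\mu(v^\ast(s))=s$; as you note at the end of that paragraph, the rigorous formulation avoids this by differentiating the absolutely continuous map $s\mapsto\int_{D(s)}g$ directly and using \eqref{brothers1}. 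With those points tidied, the argument is sound.
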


Moreover, the following rigidity result is proved in \cite{BM,M} in the case $H(\cdot) = K(\cdot) = |\cdot|$ and $f(x,u) = f(x)$:

\begin{theorem}
    Let $\Omega \subset \R^n$ be a bounded open set, and let $v \in W_0^{1,1}(\Omega)$ be a non-negative function. Denote by $f(x) = \abs{\nabla v}(x)$ and by $z \in W_0^{1,1}(\Omega^{\sharp})$ the spherically decreasing solution to
    \[
        \abs{\nabla z}(x) = f_{\sharp}(x)\quad  \text{in } \Omega^\sharp.
    \]
    If $\norma{v}_{L^1} = \norma{z}_{L^1}$ then there exists $x_0 \in \R^n$ such that $\Omega = x_0 + \Omega^{\sharp}$, $f=f_{\sharp}(\cdot \, + x_0)$ and $v = z(\cdot \, + x_0)$.
\end{theorem}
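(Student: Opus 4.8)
I would obtain the statement by running the proof of the comparison inequality $\normab{v}_{L^1(\Omega)}\le\normab{w}_{L^1(\Omega^\sharp)}$ (Theorem~\ref{Giarrusso_Nunziante} in the case $H=\abs{\cdot}$, $K=\mathrm{id}$, so that $f=\abs{\nabla v}$) while keeping track of every inequality used, and then arguing that the equality $\normab{v}_{L^1(\Omega)}=\normab{w}_{L^1(\Omega^\sharp)}$ forces each of them to be an equality. Writing $\mu(t)=\abs{\Set{v>t}}$ and letting $D(s)$ denote the super-level set of $v$ of measure $s$, that comparison rests, for a.e.\ $t$, on: the co-area inequality \eqref{brothers1} for $\mu$, which is an equality exactly when $\mu$ is absolutely continuous; the co-area formula \eqref{coarea} applied to $\int_{\Set{v>t}}\abs{\nabla v}\,dx$, giving $-\tfrac{d}{dt}\int_{\Set{v>t}}f\,dx=\mathcal H^{n-1}(\Set{v=t})$; the isoperimetric inequality $\mathcal H^{n-1}(\Set{v=t})\ge P(\Set{v>t})\ge n\omega_n^{1/n}\mu(t)^{1-1/n}$, which is an equality exactly when $\Set{v>t}$ is a ball up to a null set; the identity $\int_{\Set{v>t}}f\,dx=\int_0^{\mu(t)}F(\sigma)\,d\sigma$ with $F$ the pseudo-rearrangement of $f$ from Lemma~\ref{pseudorearr}; and finally the Hardy--Littlewood inequality \eqref{hardy_littlewood}, applied — after integrating the resulting bound $v^*(s)\le\int_s^{\abs{\Omega}}F(\sigma)\,(n\omega_n^{1/n}\sigma^{1-1/n})^{-1}\,d\sigma$ and exchanging the order of integration — to the functions $F$ and $\sigma\mapsto\sigma^{1/n}$, which reproduces $\normab{w}_{L^1(\Omega^\sharp)}$ and is an equality exactly when $F$ is non-decreasing and equidistributed with $f$, i.e.\ when $F(\sigma)=f^*(\abs{\Omega}-\sigma)$ is the increasing rearrangement of $f$.

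Granting $\normab{v}_{L^1}=\normab{w}_{L^1}$, I would then read off: (a) $\mu$ is absolutely continuous; (b) $\Set{v>t}$ is, up to a null set, a ball $B_{r(t)}(x(t))$ with $\omega_nr(t)^n=\mu(t)$, for a.e.\ $t$; (c) $F(\sigma)=f^*(\abs{\Omega}-\sigma)$, so that $\int_{D(s)}f\,dx=\int_{\abs{\Omega}-s}^{\abs{\Omega}}f^*(\sigma)\,d\sigma$, which by the equality case of \eqref{hardy_littlewood} means $D(s)$ is a sub-level set of $f$ for a.e.\ $s$; equivalently, $f$ equals a.e.\ a non-increasing function $\Theta$ of $v$, so that $\abs{\nabla v}=\Theta(v)$ is constant on a.e.\ level set of $v$. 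Substituting $\int_{\Set{v>t}}f\,dx=\int_{\abs{\Omega}-\mu(t)}^{\abs{\Omega}}f^*$ into the now exact relation $-\tfrac{d}{dt}\int_{\Set{v>t}}f\,dx=n\omega_n^{1/n}\mu(t)^{1-1/n}$ gives the closed ODE $-\mu'(t)=n\omega_n^{1/n}\mu(t)^{1-1/n}/f^*(\abs{\Omega}-\mu(t))$; the distribution function of $w$ — for which all the relations above hold with equality — solves the same ODE with the same initial datum (and, using this, one also checks that $\Set{v>0}=\Omega$ up to a null set, otherwise the comparison is strict), so $v$ and $w$ are equidistributed and $v^\sharp=w$. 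It then remains to upgrade "equidistributed with $w$, ball super-level sets, gradient constant on level sets" to "a translate of $w$".

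For this I would pass to an eikonal normalisation: with $\psi(\tau)=\int_0^\tau\Theta(s)^{-1}\,ds$, the function $\tilde v:=\psi\circ v$ satisfies $\abs{\nabla\tilde v}=\psi'(v)\abs{\nabla v}=1$ a.e., $\tilde v=0$ on $\partial\Omega$, $\tilde v\ge0$, has the same (ball) super-level sets as $v$, and the ODE for $\mu$ gives $\psi(t)=R-r(t)$ with $\omega_nR^n=\abs{\Omega}$, so that $\Set{\tilde v>s}$ is a ball of radius exactly $R-s$, say $B_{R-s}(y(s))$. Since $f\in L^\infty$ (the case of interest; in general one first reduces to it by truncation), $\tilde v$ is $1$-Lipschitz, hence $\tilde v\le\mathrm{dist}(\cdot,\partial\Omega)$; comparing the nested spheres $\partial B_{R-s_1}(y(s_1))$ and $\partial B_{R-s_2}(y(s_2))$ for $s_1<s_2$, on which $\tilde v$ equals $s_1$ and $s_2$, the $1$-Lipschitz bound forces $(R-s_1)-(R-s_2)-\abs{y(s_1)-y(s_2)}\ge s_2-s_1$, that is $\abs{y(s_1)-y(s_2)}\le0$; hence $y(\cdot)\equiv x_0$ and the super-level balls of $\tilde v$ are concentric. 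Letting $s\downarrow0$ gives $\Omega=\Set{v>0}=B_R(x_0)$, then $\tilde v=R-\abs{\cdot-x_0}$, so $v=v^\sharp(\cdot-x_0)=w(\cdot-x_0)$ and $f=\abs{\nabla v}=f_\sharp(\cdot-x_0)$: $\Omega$ is a translate of $\Omega^\sharp$, and $v$, $f$ are the corresponding translates of $w$, $f_\sharp$, which is the claim.

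The hard part is the concentricity step: a nested family of balls with prescribed radii need not be concentric, and I only recovered concentricity by exploiting the eikonal/$1$-Lipschitz structure. Interlaced with it are the low-regularity issues that permeate the argument when only $v\in W^{1,1}_0(\Omega)$: making sense of the super-level sets, justifying the inversion $v^*=\mu^{-1}$ together with $v^*(s)=\int_s^{\abs{\Omega}}(-\mu'(v^*(\sigma)))^{-1}\,d\sigma$, and the chain rule for $\psi\circ v$. When $f\in L^\infty$ — so $v$ is Lipschitz and $s\mapsto\int_{D(s)}f\,dx$ is Lipschitz — these are routine, so I would prove the theorem first in that setting and recover the general case by approximation.
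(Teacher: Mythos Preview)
The paper does not contain a proof of Theorem~\ref{Mercaldo}: it is quoted in Section~\ref{section2} as a preliminary result, introduced with the sentence ``Moreover, in \cite{BM,M} the following uniqueness result is proved,'' and is used only to motivate the quantitative question addressed by Theorem~\ref{main_theorem}. There is therefore no proof in the paper against which to compare your proposal.

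For what it is worth, your outline is the natural route to such a rigidity statement and is in the spirit of the arguments in \cite{BM,M}: run the comparison proof, force each inequality to be an equality, deduce that the super-level sets $\{v>t\}$ are balls and that $f$ is a non-increasing function of $v$, and then argue concentricity. Your eikonal normalisation $\tilde v=\psi\circ v$ together with the $1$-Lipschitz comparison of nested spheres is a clean way to carry out the last step. One point to watch: the chain rule $\abs{\nabla\tilde v}=\psi'(v)\abs{\nabla v}$ requires $\psi$ Lipschitz, i.e.\ $\Theta$ bounded away from zero, which is \emph{not} guaranteed by $f\in L^\infty$ alone; it is the lower bound $f\ge m>0$ (present in the paper's main theorems but absent from the hypotheses of Theorem~\ref{Mercaldo}) that makes this step routine. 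Your identity $\psi(t)=R-r(t)$ shows $\psi$ itself is always finite, but without a lower bound on $f$ the passage to $\abs{\nabla\tilde v}=1$ and the reduction to the bounded case ``by approximation'' need more justification than you give.
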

\subsection{Some quantitative inequalities}

To prove our main theorem, we first recall several quantitative results that will be used throughout the article.  
The starting point is the quantitative isoperimetric inequality, established in \cite{FMP} (see also \cite{CL,Fuglede,H}).

We recall that the \emph{Fraenkel asymmetry}, denoted by $\alpha(\Omega)$, is a scaling-invariant functional depending only on the shape of $\Omega$. It takes values in $[0,2]$ and will serve as our asymmetry index: it is zero if and only if $\Omega$ is a ball.  Hence, the sharp quantitative isoperimetric inequality can be stated as follows:
\begin{theorem}[Quantitative isoperimetric inequality]
    \label{quant_isop_prop}
    There exists a constant $\gamma_n$ such that,  for any measurable set $\Omega$ of finite measure
    \begin{equation}\label{quant_isop}
        P(\Omega)\geq n\omega_n^{\frac{1}{n}}\abs{\Omega}^{\frac{n-1}{n}}\left(1+\dfrac{\alpha^2(\Omega)}{\gamma_n}\right).
    \end{equation}

\end{theorem}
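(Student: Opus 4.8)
Since this is precisely the quantitative isoperimetric inequality of Fusco--Maggi--Pratelli, the honest answer is to invoke \cite{FMP} (see also \cite{CL,Fuglede}); nevertheless, here is the argument I would reconstruct. By scaling I would first reduce to the normalized case $\abs{\Omega}=\omega_n$, where the claim becomes: there is a dimensional constant $\gamma_n$ with
\begin{equation*}
D(\Omega):=\frac{P(\Omega)}{n\omega_n}-1\ \ge\ \frac{\alpha^2(\Omega)}{\gamma_n}.
\end{equation*}
I may also assume $D(\Omega)$ is as small as I wish, since $\alpha(\Omega)\le 2$ makes the inequality trivial when the deficit is bounded below, and that $\Omega$ is a set of finite perimeter (otherwise $P(\Omega)=+\infty$).

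\textbf{Step 1: reduction to nearly spherical sets.} Here I would argue by contradiction: if the estimate failed there would be sets $\Omega_j$ with $D(\Omega_j)\to 0$ and $D(\Omega_j)/\alpha^2(\Omega_j)\to 0$. Following the selection principle of \cite{CL}, I would apply Ekeland's variational principle to replace $\Omega_j$ by a set $\widetilde\Omega_j$ that almost minimizes the perimeter under a volume constraint plus a small asymmetry penalization; regularity theory for almost perimeter minimizers then gives that $\widetilde\Omega_j$ is smooth and $C^1$-close to a ball, so, after a translation and a rescaling, its boundary is a normal graph
\begin{equation*}
\partial\widetilde\Omega_j=\bigl\{\,(1+u_j(\sigma))\,\sigma\ :\ \sigma\in\mathbb S^{n-1}\,\bigr\},\qquad \norma{u_j}_{C^1(\mathbb S^{n-1})}\to 0.
\end{equation*}
I would then check that this replacement changes $D$ and $\alpha$ only by negligible amounts, reducing matters to nearly spherical sets. (Alternatively one reaches nearly spherical sets, as in \cite{FMP}, by iterating Steiner symmetrizations while carefully tracking how the deficit and the asymmetry evolve.)

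\textbf{Step 2: the nearly spherical case (Fuglede's lemma).} For $E=\{(1+u(\sigma))\sigma\}$ with $\norma{u}_{C^1}$ small I would impose $\abs{E}=\omega_n$ and translate $E$ so that its barycenter is the origin; expanding these two constraints to second order forces the zeroth and first spherical harmonics of $u$ to be $O(\norma{u}^2)$. Writing $u=\sum_{k,i}a_{k,i}Y_{k,i}$ and expanding the area functional, I would obtain
\begin{equation*}
D(E)=\frac{1}{2n\omega_n}\int_{\mathbb S^{n-1}}\Bigl(\abs{\nabla_\sigma u}^2-(n-1)\,u^2\Bigr)\,d\mathcal H^{n-1}+o\bigl(\norma{u}_{H^1(\mathbb S^{n-1})}^2\bigr),
\end{equation*}
and then use the eigenvalue identity $\int_{\mathbb S^{n-1}}\abs{\nabla_\sigma Y_{k,i}}^2=k(k+n-2)\int_{\mathbb S^{n-1}}Y_{k,i}^2$ together with the vanishing of the $k=0,1$ modes to bound the right-hand side below by $c_n\sum_{k\ge2}a_k^2\ge c_n\norma{u}_{L^2(\mathbb S^{n-1})}^2$. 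Since also $\alpha(E)\le c_n'\,\norma{u}_{L^1(\mathbb S^{n-1})}\le c_n''\,\norma{u}_{L^2(\mathbb S^{n-1})}$, I would conclude $D(E)\ge\gamma_n^{-1}\alpha^2(E)$ with a purely dimensional $\gamma_n$, and undoing the reductions of Step 1 would yield \eqref{quant_isop}.

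The main obstacle is \textbf{Step 1}: guaranteeing that passing to the almost-minimal (hence nearly spherical) competitor does not destroy a fixed fraction of the asymmetry while the deficit is sent to zero. This is exactly where the asymmetry-penalized functional (or, in the symmetrization approach of \cite{FMP}, the delicate bookkeeping on how Steiner symmetrization affects $\alpha$) carries the weight of the proof; Step 2 is then a clean linear computation on the sphere, and the reduction to $\abs{\Omega}=\omega_n$ and to small deficit is routine.
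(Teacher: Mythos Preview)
Your proposal is correct, and in fact it goes well beyond what the paper does: the paper does not prove this theorem at all but simply recalls it as a known preliminary result, citing \cite{FMP} (with \cite{H,CL,Fuglede} as related references) and noting that the constant $\gamma_n$ is made explicit in \cite{FMP2}. Your opening sentence---that the honest answer is to invoke \cite{FMP}---is therefore exactly the paper's ``proof''.

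The sketch you then supply is a sound reconstruction of one of the standard routes to the result: the selection principle of Cicalese--Leonardi \cite{CL} to reduce to nearly spherical sets, followed by Fuglede's second-order expansion \cite{Fuglede} on the sphere. This is a genuinely different strategy from the original symmetrization argument of \cite{FMP}, which proceeds by reducing first to $n$-symmetric sets and then estimating the asymmetry along Steiner symmetrizations; you mention this alternative in passing. Both approaches are correct and well known, and either would be acceptable as a sketch, but since the paper treats the theorem as a black box, none of this is required here.
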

The dimensional constant $\gamma_n$ is explicitly computed in \cite{FMP2}.  

When analyzing the asymmetry of our domain in Section~\ref{section3}, we will apply the quantitative isoperimetric inequality \eqref{quant_isop} to the superlevel sets of the modulus of a solution to \eqref{original_u}.  
The main difficulty lies in estimating how the asymmetry changes from the whole domain to the superlevel sets, for which we use the following result (see also \cite[Lemma 2.8]{BD}).
\begin{lemma}\label{propasi}
    Let $\Omega\subset\R^n$ be an open set with finite measure and $U\subset\Omega$ a subset of positive measure such that \[\frac{|\Omega\backslash U|}{|\Omega|}\leq\frac{1}{4}\alpha(\Omega)\]
    then holds \[\alpha(U)\geq\frac{1}{2}\alpha(\Omega)\]
\end{lemma}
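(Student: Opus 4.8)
\emph{Proof strategy.} The plan is to combine the triangle inequality for the symmetric difference of sets with the minimality built into the definition of $\alpha(\Omega)$. Although an optimal ball for $U$ need not be related a priori to an optimal ball for $\Omega$, we are free to test the asymmetry of $\Omega$ against the \emph{concentric} ball of the correct measure; once this choice is made, every discrepancy term can be estimated by $\abs{\Omega\setminus U}$ alone.

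\emph{Main steps.} Let $r,\rho>0$ be determined by $\abs{B_r}=\abs{\Omega}$ and $\abs{B_\rho}=\abs{U}$; since $U\subset\Omega$ we have $\rho\le r$. Fix an arbitrary point $x\in\R^n$ and consider the concentric balls $B_\rho(x)\subset B_r(x)$. Applying the inequality $\abs{A\triangle C}\le\abs{A\triangle B}+\abs{B\triangle C}$ twice (first with $B=U$, then with the intermediate ball) gives
\[
\abs{\Omega\triangle B_r(x)}\le\abs{\Omega\triangle U}+\abs{U\triangle B_\rho(x)}+\abs{B_\rho(x)\triangle B_r(x)}.
\]
Because $U\subset\Omega$, the first term equals $\abs{\Omega\setminus U}$; because $B_\rho(x)\subset B_r(x)$, the last term equals $\abs{B_r(x)}-\abs{B_\rho(x)}=\abs{\Omega}-\abs{U}=\abs{\Omega\setminus U}$. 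Hence
\[
\abs{\Omega\triangle B_r(x)}\le 2\abs{\Omega\setminus U}+\abs{U\triangle B_\rho(x)}.
\]
On the other hand, $B_r(x)$ is an admissible competitor in the definition of $\alpha(\Omega)$, so $\abs{\Omega\triangle B_r(x)}\ge\alpha(\Omega)\abs{\Omega}$, and rearranging yields
\[
\abs{U\triangle B_\rho(x)}\ge\alpha(\Omega)\abs{\Omega}-2\abs{\Omega\setminus U}.
\]

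\emph{Conclusion.} Now invoke the hypothesis $\abs{\Omega\setminus U}\le\frac14\alpha(\Omega)\abs{\Omega}$, which gives $2\abs{\Omega\setminus U}\le\frac12\alpha(\Omega)\abs{\Omega}$ and therefore $\abs{U\triangle B_\rho(x)}\ge\frac12\alpha(\Omega)\abs{\Omega}\ge\frac12\alpha(\Omega)\abs{U}$, the last step using $\abs{U}\le\abs{\Omega}$. Dividing by $\abs{U}=\abs{B_\rho(x)}$ and taking the infimum over all such balls $B_\rho(x)$ yields $\alpha(U)\ge\frac12\alpha(\Omega)$. There is no genuine analytic obstacle here; the only points requiring care are to work throughout with \emph{concentric} balls so that both error terms collapse to $\abs{\Omega\setminus U}$, and to track that the numerical constants match ($2\cdot\frac14=\frac12$), which is precisely why the stated constants are the natural ones.
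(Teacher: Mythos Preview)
Your argument is correct. The paper itself does not prove this lemma: it merely quotes the statement from \cite[Lemma 2.8]{BD} and uses it as a black box, so there is no in-paper proof to compare against. Your route via the triangle inequality for symmetric differences, with the key choice of \emph{concentric} comparison balls so that both error terms reduce to $\abs{\Omega\setminus U}$, is the standard elementary proof of this fact and matches the constants exactly.
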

Another key ingredient is the quantitative version of P\'olya-Szeg\H o
inequality, proved in \cite{CFET}.
\begin{theorem}[Quantitative P\'olya-Szeg\H o
inequality]
    Let $u\in W^{1,2}(\R^n)$, $n\geq 2$. Then there exist  positive constants $r$, $s$ and $C$, depending only on $n$,  such that
\begin{equation*}
   \inf_{x_0 \in \R^n} \dfrac{\displaystyle{\int_{\R^n} \abs{u(x)\pm u^\sharp(x+x_0)} \, dx }}{\abs{\{\abs{u}>0\}}^{\frac{1}{n}+\frac{1}{2}}\norma{\nabla u^\sharp}_2}\leq C(n) \left[M_{u}(E(u)^r)+E(u)\right]^s,
\end{equation*}
where 
\begin{equation*}
E(u)= \frac{\displaystyle{\int_{\R^n} \abs{\nabla u}^2}}{\displaystyle{\int_{\R^n} |\nabla u^{\sharp}|^2}}-1 \qquad \text{ and } \qquad M_{u}(\delta)=\dfrac{\abs{\left\{|\nabla u|<\delta\right\}\cap \left\{0<\abs{u}<||u||_\infty\right\}}}{\abs{\{\abs{u}>0\}}}.
\end{equation*}
\end{theorem}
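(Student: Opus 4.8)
The plan is to follow the slicing strategy underlying the classical Pólya--Szegő inequality \eqref{psi}, upgrading each inequality used along the way to its quantitative counterpart. By the usual truncation and approximation arguments we may assume that $u$ is nonnegative, bounded, compactly supported and a smooth Morse function, so that the distribution function $\mu(t)=|\{u>t\}|$ is locally Lipschitz, $-\mu'(t)=\int_{\{u=t\}}|\nabla u|^{-1}\,d\mathcal H^{n-1}$, and almost every $t$ is a regular value. Combining the coarea formula \eqref{coarea} with the Cauchy--Schwarz inequality on each level set gives, for a.e.\ $t$,
\begin{equation*}
\int_{\{u=t\}}|\nabla u|\,d\mathcal H^{n-1}\ \ge\ \frac{P(\{u>t\})^{2}}{-\mu'(t)}\ \ge\ \frac{\big(n\omega_n^{1/n}\mu(t)^{(n-1)/n}\big)^{2}}{-\mu'(t)}\ =\ \int_{\{u^\sharp=t\}}|\nabla u^\sharp|\,d\mathcal H^{n-1},
\end{equation*}
the last equality because $u$ and $u^\sharp$ are equi-distributed. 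Integrating in $t$ reproves \eqref{psi} for $p=2$ and shows that $E(u)\int_{\R^n}|\nabla u^\sharp|^{2}$ equals the sum of the \emph{isoperimetric gap} $\int_0^\infty\big[P(\{u>t\})^{2}-\big(n\omega_n^{1/n}\mu(t)^{(n-1)/n}\big)^{2}\big](-\mu'(t))^{-1}\,dt$ and the \emph{Cauchy--Schwarz gap} $\int_0^\infty\big[\int_{\{u=t\}}|\nabla u|\,d\mathcal H^{n-1}-P(\{u>t\})^{2}(-\mu'(t))^{-1}\big]\,dt$; hence $E(u)$ controls both.

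I would then insert the isoperimetric gap into the quantitative isoperimetric inequality (Theorem \ref{quant_isop_prop}), which forces a set of small Fraenkel asymmetry to be $L^1$-close to a well-located ball, to obtain
\begin{equation*}
\int_0^\infty\frac{\mu(t)^{2(n-1)/n}}{-\mu'(t)}\,\alpha\big(\{u>t\}\big)^{2}\,dt\ \le\ C(n)\,E(u)\int_{\R^n}|\nabla u^\sharp|^{2},
\end{equation*}
so that the super-level sets of $u$ are, in an integrated and weighted sense, close to balls. The heart of the matter is to pass from this to the closeness of $u$ itself to a translate of $u^\sharp$. Using the layer-cake identity
\begin{equation*}
\normab{u-u^\sharp(\cdot+x_0)}_{L^1(\R^n)}=\int_0^\infty\big|\{u>t\}\,\triangle\,B_{r(t)}(x_0)\big|\,dt,\qquad \omega_n\,r(t)^{n}=\mu(t),
\end{equation*}
the task is to select a single center $x_0$ that is almost optimal for every $t$ at once: one bounds the drift of the "nearest-ball" centers $t\mapsto x(t)$ of the sets $\{u>t\}$, exploiting the nesting $\{u>t_2\}\subset\{u>t_1\}$ together with the Cauchy--Schwarz gap (which prevents the profile of $u$ from collapsing between nearby levels), so that a common $x_0$ exists for which $\normab{u-u^\sharp(\cdot+x_0)}_{L^1}$ is bounded by a power of $E(u)$ on the range of levels where $|\nabla u^\sharp|$ is not too small; the $\pm$ version is obtained in the same way.

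The main obstacle, and the reason the term $M_{u^\sharp}\big(E(u)^{r}\big)$ and the technical exponents $r,s$ appear in \eqref{quanty_poly}, is precisely the region where $|\nabla u^\sharp|$ is small: there the weight $-\mu'(t)=\big(n\omega_n^{1/n}\mu(t)^{(n-1)/n}\big)^{2}\big(\int_{\{u^\sharp=t\}}|\nabla u^\sharp|\big)^{-1}$ blows up, so a small isoperimetric gap no longer forces $\alpha(\{u>t\})$ to be small and the center-drift control degenerates (and $u^\sharp$ may even have plateaus). The remedy is to fix a threshold $\delta>0$, discard the levels $t$ for which the value of $|\nabla u^\sharp|$ on $\{u^\sharp=t\}$ is below $\delta$ — their total contribution to $\normab{u-u^\sharp(\cdot+x_0)}_{L^1}$ is controlled, after the normalization by $|\{u>0\}|^{1/n+1/2}\norma{\nabla u^\sharp}_{2}$ appearing in \eqref{quanty_poly}, by $|\{|\nabla u^\sharp|<\delta\}\cap\{0<u^\sharp<\norma{u}_{\infty}\}|$ and hence by $M_{u^\sharp}(\delta)$ (see \eqref{eumu}) — run the center-synchronization argument on the surviving levels with constants deteriorating like a negative power of $\delta$, and then balance the two contributions by choosing $\delta=E(u)^{r}$ with a suitable dimensional $r$. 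This produces the exponent $s=s(n)$ and, after undoing the Morse reduction and the scaling normalization, gives \eqref{quanty_poly}. I expect the center-drift estimate on the good levels to be the genuinely delicate analytic step; the passage from $E(u)$ to the two gaps and the optimization in $\delta$ are essentially bookkeeping.
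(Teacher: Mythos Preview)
The paper does not prove this theorem. It is stated in the preliminaries section as a known result, with the proof attributed to \cite{CFET}; no argument is given or even sketched in the paper itself. There is therefore nothing in the paper to compare your proposal against.

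As a sketch of the strategy in \cite{CFET}, your outline is faithful to the architecture of that proof: the decomposition of the Pólya--Szeg\H o deficit into an isoperimetric gap and a Cauchy--Schwarz gap via coarea, the use of Theorem~\ref{quant_isop_prop} on the level sets, the layer-cake reduction of $\normab{u-u^\sharp(\cdot+x_0)}_{L^1}$ to symmetric differences of super-level sets, and the identification of the region $\{|\nabla u^\sharp|<\delta\}$ as the obstruction responsible for the $M_{u^\sharp}$ term are all correct in spirit. You also correctly flag the center-synchronization (controlling the drift of the optimal centers $x(t)$ so that a single $x_0$ works) as the genuinely hard step. That said, what you have written is a roadmap, not a proof: the center-drift argument in \cite{CFET} is lengthy and technical, and nothing in your sketch indicates how one would actually carry it out or why the specific exponents $r,s$ emerge. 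For the purposes of the present paper this is immaterial, since the result is simply quoted.
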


Finally, the main tool to obtain \eqref{GN} is the Hardy-Littlewood inequality \eqref{hardy_littlewood}. To this end, we will use a quantitative version of the Hardy–Littlewood inequality, for which we first introduce some notation, presenting a simplified formulation for clarity although the result was established in a more general setting in \cite{CF}

We define the following Lorentz-type norm:
\begin{equation*}
\normab{g}_{\Lambda^1_1(\Omega)}=\int_0^{\abs{\Omega} }g^\ast(s)\, d\theta(s) + \theta(0^+) \normab{g}_\infty  ,
\end{equation*}
where, for $s\in [0, \abs{\Omega})$,
\begin{equation*}
\theta(s)=\text{ess}\sup_{\sigma \in [0,s)} \frac{1}{-(h^\ast)'(\sigma)}.
\end{equation*}

\begin{theorem}[Quantitative Hardy Littlewood inequality]\label{cianchi_hardy}
    Let  $h$ and $g$ be two non negative measurable functions such that $h g\in L^1(\Omega)$ and let us assume that  $\theta(s)<\infty$ for every $0\leq s< |\Omega|$ and $\norma{g}_{\Lambda ^1_1(\Omega)}<\infty$. Then,
    it holds
    \begin{equation*}
        \int_{\Omega}h(x)g(x) \;dx+\dfrac{1}{2^{2}e} \norma{g}^{-1}_{\Lambda ^1_1(\Omega)}\norma{g-g_h}^{2}_{L^1(\Omega)} \leq \int_0^{|\Omega|} h^\ast(s) g^\ast(s) \;ds.
    \end{equation*}
\end{theorem}

\subsection{Rescaling properties}
\label{riscalamento}

    To simplify the proofs, we make the following two assumptions:
    \begin{align*}
     \abs{\Omega}=1,  \qquad \qquad \norma{f}_\infty=1. 
    \end{align*}

    Even in the general case, we can recover this assumption by setting
    $$a=\dfrac{\abs{\Omega}^{-\frac{1}{n}}}{\norma{f}_\infty}, \quad \quad b= \abs{\Omega}^{-\frac{1}{n}}.$$
    If $u$ is a solution to \eqref{original_u} and $u^\text{G}$ is the unique spherically decreasing solution to \eqref{original_v}, we define the functions $w(x)= au\left(\frac{x}{b}\right)$ and $z(x)= au^G\left(\frac{x}{b}\right)$. Then, setting $g=f \frac a b$, they satisfy 
    
    $$\begin{cases}
        |\nabla w|=g & \text{a.e. in } \Tilde{\Omega}\\
        w=0 & \text{on } \partial\Tilde{\Omega},
    \end{cases} \qquad \qquad \begin{cases}
        |\nabla z|=g_\sharp & \text{a.e. in } \Tilde{\Omega}^\sharp\\
        z=0 & \text{on } \partial\Tilde{\Omega}^\sharp,
    \end{cases}$$
   with $|\Tilde{\Omega}|=1$ and $\norma{g}_\infty=1$.  
Furthermore, the following holds

  \begin{align*}
    \alpha(\tilde \Omega) &= \alpha(\Omega); \\
      \norma{z}_1-\norma{w}_1 &=\dfrac{\abs{\Omega}^{-1-\frac{1}{n}}}{\norma{f}_\infty}\left( ||u^G||_1-||u||_1\right);\\
      \normab{w\pm w^\sharp}_1 &=\dfrac{\abs{\Omega}^{-1-\frac{1}{n}}}{\norma{f}_\infty} \normab{u\pm u^\sharp}_1;\\
      \min g&= \frac{\min f}{\normab{f}_\infty}.
 \end{align*}

Moreover, if $\mu_w(t)$ and $\mu_u(t)$ are the distribution functions of $w$ and $u$ respectively, then
$$
\mu_w(t)= \abs{\Omega}^{-1}\mu_u\left(\frac{t}{a}\right),
$$
which yields
$$
  \normab{g-g_w}_1 =\dfrac{1}{\abs{\Omega}\norma{f}_\infty} \normab{f-f_u}_1.
$$

\section{Asymmetry of the domain}
\label{section3}
Thanks to the rescaling properties in Subsection \ref{riscalamento}, we will make the additional assumptions that $|\Omega| =||f||_\infty = 1$. The general result will be recovered through scaling arguments.

Our first step in proving Theorem \ref{main_theorem} is to study the asymmetry of the set $\Omega$. To this aim we recall the definition of $s_\Omega$, given in \cite{BD},
    \begin{equation}
        \label{somega}
        s_\Omega=\sup\left\{t\geq0:\mu_u(t)\geq1-\frac{\alpha(\Omega)}{4}\right\},
    \end{equation}
    and we prove an intermediate Lemma involving the function $g$, that is the unique spherically decreasing solution to the problem \begin{equation}\label{defg}
        \begin{cases}
            |\nabla g|=F(\omega_n|x|^n)\quad&\text{in }\Omega^\sharp \\ g=0\quad&\text{on }\partial\Omega^\sharp,
        \end{cases}
       \end{equation}
       
  where $F$ is the pseudo-rearrangement of $f$ with respect to $u$ in the sense of definition \ref{pseudo}.
  In \cite{GN}, the authors prove the pointwise inequality $$u^\sharp(x)\leq g(x) \qquad x \in \Omega^\sharp.$$
  The validity of the pointwise inequality is way stronger than the $L^1$ comparison, and subsequently, in the proof of Proposition \ref{main_theorem_part1}, we link $\normab{g}_1$ to  $\normab{u^\text{G}}_1$.
\begin{lemma}
Let $\Omega$ be a bounded open set of $\R^n$, $n\geq 2$, and let $f\in L^\infty(\Omega)$  be a non-negative function. Suppose that $u$ is a solution to \eqref{original_u}. Then, it holds
    \[
    \normab{g}_{L^1(\Omega^\sharp)}- \normab{u}_{L^1(\Omega)}\geq s_\Omega\frac{\alpha(\Omega)^2}{8\gamma_n},\]
    where $\gamma_n$ is the constant appearing in Theorem \ref{quant_isop_prop} and $g$ is the function defined in \eqref{defg}.
\end{lemma}
\begin{proof}
Firstly, we observe that, since $g$ is  spherically decreasing, we have the following explicit expression
\[g(s)=\int_s^{|\Omega|}\frac{F(t)}{n\omega_n^\frac{1}{n} t^{1-\frac{1}{n}}}dt.\]
    By Fleming-Rishel Formula \eqref{flemingrishel} and the quantitative isoperimetric inequality \eqref{quant_isop}, we obtain\begin{equation}\label{perimetri}
-\frac{d}{dt}\int_{\abs{u}>t}|\nabla u|dx=P(\abs{u}>t)\geq n\omega_n^\frac{1}{n}\mu_u(t)^{1-\frac{1}{n}}\left(1+\frac{\alpha({\abs{u}>t})^2}{\gamma_n}\right),   
\end{equation}
and since $u$ is a solution to \eqref{original_u}, we have\begin{equation}\label{eqint}-\frac{d}{dt}\int_{\abs{u}>t}|\nabla u|dx=-\frac{d}{dt}\int_{\abs{u}>t}f(x)dx=-\frac{d}{dt}\int_0^{\mu_u(t)}F(\tau)d\tau=F(\mu_u(t))(-\mu_u'(t)).\end{equation}
By combining \eqref{perimetri} and \eqref{eqint}, it follows \begin{equation*}
    \frac{\alpha({\abs{u}>t})^2}{\gamma_n}\leq\frac{F(\mu_u(t))(-\mu_u'(t))}{n\omega_n^\frac{1}{n}\mu_u(t)^{1-\frac{1}{n}}} -1.
\end{equation*}
Integrating both sides from $0$ to $t$, we have \[\int_0^t \frac{F(\mu_u(\tau))(-\mu_u'(\tau))}{n\omega_n^\frac{1}{n}\mu_u(\tau)^{1-\frac{1}{n}}}d\tau-t\geq\int_0^t \frac{\alpha({\abs{u}>\tau})^2}{\gamma_n}d\tau,\] which gives,  by the definition of rearrangement \ref{decreasing:rear} 
\begin{equation}\label{diffrior}
    g(s) -u^*(s)=\int_s^{|\Omega|}\frac{F(t)}{n\omega_n^\frac{1}{n} t^{1-\frac{1}{n}}}dt-u^*(s)\geq\int_0^{u^\ast(s)}\frac{\alpha({\abs{u}>t})^2}{\gamma_n}dt.
\end{equation}
In order to apply the Lemma \ref{propasi} to the integrand on the right hand side of \eqref{diffrior}, we define the set \[A=\left\{t\geq0:\mu_u(t)\geq1-\frac{\alpha({\Omega})}{4}\right\}.\] 
The set $A$ is not empty if $\alpha(\Omega)>0$, and it is in fact an interval. Moreover, for every $t\in A$ it holds \[\frac{|\Omega\backslash\{\abs{u}>t\}|}{|\Omega|}=|\Omega\backslash\{\abs{u}>t\}|=1-\mu_u(t)\leq\frac{\alpha({\Omega})}{4},\] and by Lemma \ref{propasi}, we have \[\alpha({\abs{u}>t})\geq\frac{\alpha({\Omega})}{2}.\]

Thus, since $s_\Omega = \sup A$, for $t<s_\Omega$, we have 
$$\mu_u(t) \geq 1-\frac{\alpha({\Omega})}{4} \implies \alpha(\abs{u}>t) < \frac{\alpha(\Omega)}{2}, $$
hence the integral on the right-hand side of \eqref{diffrior} can be estimated from below as
\begin{equation}\label{intasi}
    \int_0^{u^\ast(s)}\frac{\alpha({\abs{u}>t})^2}{\gamma_n}dt\geq \int_0^{\min\{s_\Omega,u^*(s)\}}\frac{\alpha({\abs{u}>t})^2}{\gamma_n}dt\geq\min\{s_\Omega,u^*(s)\}\frac{\alpha({\Omega})^2}{4\gamma_n}.
\end{equation}

Let us combine \eqref{diffrior} and \eqref{intasi} and let us integrate in $[0,1]= [0, \abs{\Omega}]$. Hence we get
$$
\|g\|_1-\|u\|_1\geq\frac{\alpha({\Omega})^2}{4\gamma_n}\int_0^1\min\{s_\Omega, u^\ast(s)\} \, ds.
$$
Last, the integral of the minimum can be estimated as follows
$$
\int_0^1\min\{s_\Omega, u^\ast(s)\} \, ds \geq\int_0^{1-\frac{\alpha(\Omega)}{4}}\min\{s_\Omega, u^\ast(s)\} \, ds = s_\Omega \left(1-\frac{\alpha(\Omega)}{4}\right), $$
as  for $s \leq 1-\frac{\alpha(\Omega)}{4}$ it holds $u^\ast(s) \geq u^\ast\left(1-\frac{\alpha(\Omega)}{4}\right)= s_\Omega$, obtaining
\[\|g\|_1-\|u\|_1\geq s_\Omega\frac{\alpha({\Omega})^2}{8\gamma_n}.\]
\end{proof}

We are now in position to prove that the deficit $\displaystyle{\normab{u^\text{G}}_{L^1(\Omega^\sharp)}-\normab{u}_{L^1(\Omega)}}$ controls the asymmetry index of $\Omega$.

\begin{prop}
\label{main_theorem_part1}
Let $\Omega$ be a bounded open set of $\R^n$, $n\geq 2$,  and let $f\in L^\infty(\Omega)$ with $f\geq m >0$ be a non-negative function.
Suppose that $u$ and $u^\text{G}$ are solutions to \eqref{original_u} and \eqref{original_v}, respectively. 

    Then there exists an explicit constant $C_1=C_1(n,m)$ such that the following inequality holds
    \[C_1\alpha(\Omega)^3 \leq \normab{u^\text{G}}_{L^1(\Omega^\sharp)}-\normab{u}_{L^1(\Omega)}.\]
\end{prop}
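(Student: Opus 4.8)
The plan is to feed the previous Lemma into the comparison $\normab{u}_{L^1(\Omega)}\le\normab{g}_{L^1(\Omega^\sharp)}\le\normab{u^\text{G}}_{L^1(\Omega^\sharp)}$ and then to trade the factor $s_\Omega$ appearing there for a multiple of $\alpha(\Omega)$.

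First I would reduce to $\abs{\Omega}=1$ (and, if convenient, $\normab{f}_\infty=1$) via the rescaling of Subsection \ref{riscalamento}: $\alpha$ is scale invariant, the excess $\normab{u^\text{G}}_{L^1(\Omega^\sharp)}-\normab{u}_{L^1(\Omega)}$ rescales explicitly, and $m$ is replaced by $m/\normab{f}_\infty$, which yields the stated explicit dependence of $C_1$ on $\abs{\Omega}$ and $f^\ast$. Then I would establish $\normab{g}_{L^1(\Omega^\sharp)}\le\normab{u^\text{G}}_{L^1(\Omega^\sharp)}$: writing both norms through the decreasing rearrangement in the measure variable and applying Fubini reduces this to the fact that, for every $s$, $\int_0^s F$ is at least the integral of the $s$ smallest values of $f$ (which holds by construction of $F$, since $\int_0^s F=\int_{D(s)}f\ge\min_{\abs{E}=s}\int_E f$, with equality of the total mass), combined with an integration by parts against the weight $\sigma^{1/n}$; alternatively it is a direct consequence of Theorem \ref{Giarrusso_Nunziante} applied to $g$ on $\Omega^\sharp$. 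Combined with the Lemma this gives
\[
\normab{u^\text{G}}_{L^1(\Omega^\sharp)}-\normab{u}_{L^1(\Omega)}\ \ge\ \normab{g}_{L^1(\Omega^\sharp)}-\normab{u}_{L^1(\Omega)}\ \ge\ s_\Omega\,\frac{\alpha(\Omega)^2}{8\gamma_n},
\]
so it suffices to prove a lower bound of the form $s_\Omega\ge c(n,m)\,\alpha(\Omega)$; the cubic power in \eqref{esti_tot} then appears as the product of the two powers of $\alpha$ from the quantitative isoperimetric inequality and the single power from this lower bound.

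To bound $s_\Omega$ from below I would use that $u$, being the solution of $\abs{\nabla u}=f\ge m$ with zero boundary datum, obeys the pointwise eikonal estimate $u(x)\ge m\,\mathrm{dist}(x,\partial\Omega)$; passing to distribution functions this gives $u^\ast(s)\ge m\,d^\ast(s)$, where $d:=\mathrm{dist}(\cdot,\partial\Omega)$ and $d^\ast$ is its decreasing rearrangement, so that, recalling $s_\Omega=u^\ast\!\big(\abs{\Omega}(1-\tfrac{\alpha(\Omega)}{4})\big)$,
\[
s_\Omega\ \ge\ m\,d^\ast\!\Big(\abs{\Omega}\big(1-\tfrac{\alpha(\Omega)}{4}\big)\Big).
\]
Hence $s_\Omega\ge c\,\alpha(\Omega)$ is equivalent to $\abs{\{d\le c\,\alpha(\Omega)/m\}}\le\tfrac14\abs{\Omega}\,\alpha(\Omega)$, i.e. to a bound on the measure of the inner tubular neighbourhood of $\partial\Omega$ of width $\sim\alpha/m$ in terms of $\alpha$.

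This last estimate is the step I expect to be the main obstacle: for a general bounded open set the measure of $\{d\le\delta\}$ is not controlled by $\delta$ alone (thin or ``porous'' domains), so the argument cannot be closed through $s_\Omega$ uniformly. I would handle it by a dichotomy. If $\normab{u}_{L^1(\Omega)}$ is below a fixed fraction of the a priori lower bound $\normab{g}_{L^1(\Omega^\sharp)}\ge\frac{m\abs{\Omega}^{1+1/n}}{(n+1)\,\omega_n^{1/n}}$ (which comes from $F\ge m$), then the excess is already bounded below by an explicit $\delta_0(n,\abs{\Omega},m)>0$, and since $\alpha(\Omega)\le2$ this gives $\normab{u^\text{G}}_{L^1(\Omega^\sharp)}-\normab{u}_{L^1(\Omega)}\ge\delta_0\ge\tfrac{\delta_0}{8}\alpha(\Omega)^3$. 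In the complementary regime $\normab{u}_{L^1(\Omega)}$ is bounded below, and one then uses the monotonicity of $u^\ast$ together with the explicit lower radial profile $(u^\text{G})^\ast(s)\ge\frac{m}{\omega_n^{1/n}}(\abs{\Omega}^{1/n}-s^{1/n})$ to force enough mass into the outermost level sets of $u$ and recover the inner-tube bound. Carrying out this last part for irregular domains, and keeping all constants explicit in $\abs{\Omega}$ and $m$, is where the real work lies.
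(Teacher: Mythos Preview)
Your overall framework---feeding the Lemma into the chain $\normab{u}_{L^1}\le\normab{g}_{L^1}\le\normab{u^{\text G}}_{L^1}$ and then exchanging $s_\Omega$ for a power of $\alpha(\Omega)$---matches the paper. The divergence, and the genuine gap, is in how you try to bound $s_\Omega$.

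You attempt to prove $s_\Omega\ge c(n,m)\alpha(\Omega)$ directly, via the eikonal lower bound $u\ge m\,\mathrm{dist}(\cdot,\partial\Omega)$ and an inner tubular neighbourhood estimate on $\Omega$. As you yourself note, for a general bounded open set the measure of $\{d\le\delta\}$ is not controlled by $\delta$, so this route cannot close without further structure. Your proposed dichotomy (small $\normab{u}_{L^1}$ versus large $\normab{u}_{L^1}$) does not resolve this: in the ``complementary regime'' a lower bound on $\normab{u}_{L^1}$ gives no quantitative control on how mass is distributed near $\partial\Omega$, and the sketch (``force enough mass into the outermost level sets'') is not an argument. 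The real obstacle is that you are working on the irregular domain $\Omega$.

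The paper avoids this entirely by moving the threshold to the \emph{symmetrized} side. It does not try to prove $s_\Omega\ge c\,\alpha(\Omega)$. Instead it introduces a level $t_1$ defined through the distribution function $\nu_g$ of $g$ (namely $\nu_g(2t_1)=\abs{\Omega}(1-\alpha(\Omega)/8)$). Since $g$ is radial on the ball $\Omega^\sharp$ and solves $\abs{\nabla g}=F(\omega_n\abs{x}^n)\ge m$, the function $\nu_g$ satisfies the explicit ODE $-\nu_g'(t)=n\omega_n^{1/n}\nu_g(t)^{1-1/n}/F(\nu_g(t))$, and integrating it from $0$ to $2t_1$ gives immediately $t_1\ge c(n,m,\abs{\Omega})\,\alpha(\Omega)$---no geometry of $\Omega$ enters. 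The dichotomy is then:
\begin{itemize}
\item If $s_\Omega\ge t_1$, the Lemma gives $\normab{g}_{L^1}-\normab{u}_{L^1}\ge t_1\alpha(\Omega)^2/(8\gamma_n)\ge C\alpha(\Omega)^3$.
\item If $s_\Omega<t_1$, use the pointwise comparison $u^\sharp\le g$ on $\Omega^\sharp$ (from \cite{GN}), which gives $\mu(t)\le\nu_g(t)$ for all $t$, and write
\[
\normab{g}_{L^1}-\normab{u}_{L^1}=\int_0^\infty\big(\nu_g(t)-\mu(t)\big)\,dt\ \ge\ \int_{t_1}^{2t_1}\big(\nu_g(t)-\mu(t)\big)\,dt.
\]
For $t\in[t_1,2t_1]$ one has $\nu_g(t)\ge\nu_g(2t_1)=\abs{\Omega}(1-\alpha(\Omega)/8)$ by construction, while $t>s_\Omega$ forces $\mu(t)\le\abs{\Omega}(1-\alpha(\Omega)/4)$ by the definition of $s_\Omega$. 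Hence the integrand is at least $\abs{\Omega}\alpha(\Omega)/8$ on an interval of length $t_1\ge c\,\alpha(\Omega)$, yielding $\normab{g}_{L^1}-\normab{u}_{L^1}\ge C\alpha(\Omega)^2\ge C\alpha(\Omega)^3$.
\end{itemize}
The point is that the threshold $t_1$ is manufactured on the ball, where the relevant ``inner tube'' estimate is trivial, and the possibly bad behaviour of $u$ on $\Omega$ is absorbed by the pointwise comparison of distribution functions. Your eikonal/distance-function route can be abandoned.
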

\begin{proof}
    Let us recall that if $F$ is pseudo-rearrangement of $f$ with respect to $u$, by the mean value theorem and the hypothesis $f\geq m >0$, it holds for $a.e.\,  s \in [0, 1]$ \begin{equation}
    \label{fmagm}
        \begin{aligned}
            F(s)&=\lim_{h \to 0}\frac{1}{h}\int_s^{s+h}F(t)dt\\&=\lim_{h \to 0}\frac{1}{h}\int_{D(s+h)\backslash D(s)}f(x)\,dx\\&\geq \lim_{h \to 0}\frac{1}{h}\int_{D(s+h)\backslash D(s)}m \,dx \\&=\lim_{h \to 0}\frac{s+h-s}{h}m=m.
        \end{aligned}
    \end{equation} 

    Let $g$ be the unique spherically decreasing solution to \eqref{defg} 
       and let $\mu_g$  denote its distribution function.
       
By \eqref{fmagm}, and in view of \eqref{defg}
        the gradient $\abs{\nabla g}$
       does not vanish almost everywhere in $\Omega^\sharp$.
       Moreover, since 
$g$ is spherically decreasing, its superlevel sets are concentric balls, for which the isoperimetric inequality holds as  an equality.
    
    Hence, for $\mu_g$ we have \[1=\frac{F(\mu_g(t))(-\mu_g'(t))}{n\omega_n^\frac{1}{n}\mu_g(t)^{1-\frac{1}{n}}}\Rightarrow -\mu_g'(t)=\frac{n\omega_n^\frac{1}{n}\mu_g(t)^{1-\frac{1}{n}}}{F(\mu_g(t))}.\] 
    Following the approach in \cite{Kim},  the absolutely continuity of $\mu_g$ ensures that there exists $t_1$  such that \[\mu_g(2t_1)=1-\frac{\alpha(\Omega)}{8}.\] 
   Hence we have
   \begin{align*}
\frac{\alpha(\Omega)}{8}&=1-\left(1-\frac{\alpha(\Omega)}{8}\right)=\mu_g(0)-\mu_g(2t_1)=\\&=-\int_0^{2t_1}\mu_g'(s)ds=\int_0^{2t_1}\frac{n\omega_n^\frac{1}{n}\mu_g(s)^{1-\frac{1}{n}}}{F(\mu_g(s))}ds\leq\frac{n\omega_n^\frac{1}{n}2t_1}{m}.
\end{align*}
Finally, we obtain  that
\begin{equation}\label{stimat1}
    t_1\geq m\frac{\alpha(\Omega)}{16n\omega_n^\frac{1}{n}}.
\end{equation}
Now let us distinguish two cases:
\begin{itemize}
    \item if $s_\Omega\geq t_1$, then it directly follows from \eqref{stimat1} that \[\|g\|_1-\|u\|_1\geq s_\Omega\frac{\alpha({\Omega})^2}{8\gamma_n}\geq t_1\frac{\alpha({\Omega})^2}{8\gamma_n} \geq  \frac{m}{128n\gamma_n\omega_n^\frac{1}{n}}\alpha({\Omega})^3. \]
    \item if $s_\Omega<t_1$, then, since the pointwise inequality $u^\sharp(x)\leq g(x)$ holds in $\Omega^\sharp$, we have
    \begin{equation} \label{diffdistr}
        \|g\|_1-\|u\|_1=\int_0^\infty \left(\mu_g(t)-\mu_u(t)\right)dt\geq\int_{t_1}^{2t_1}\left(\mu_g(t)-\mu_u(t)\right)dt.
    \end{equation}
     Moreover, by $s_\Omega<t_1\leq t\leq 2t_1$ and by the definition of both $s_\Omega$ \eqref{somega} and $t_1$, it follows that \begin{equation}
         \label{stimadistr} \mu_g(t)\geq\mu_g(2t_1)=1-\frac{\alpha(\Omega)}{8} ,\qquad \mu_u(t)\leq 1-\frac{\alpha(\Omega)}{4}.
     \end{equation} Therefore, by combining \eqref{diffdistr} and \eqref{stimadistr} \begin{align*}
         \|g\|_1-\|u\|_1\geq\int_{t_1}^{2t_1}\left(\mu_g(t)-\mu_u(t)\right)dt\geq \\ \geq \frac{\alpha(\Omega)}{8}\int_{t_1}^{2t_1}dt\geq  \frac{t_1}{8}\alpha(\Omega)^2\geq  \frac{m}{256n\omega_n^\frac{1}{n}}\alpha(\Omega)^3.
     \end{align*}
\end{itemize}
To conclude we show that $$\normab{u^\text{G}}_1-\normab{u}_1 \geq \normab{g}_1-\normab{u}_1.$$
Indeed, Lemma \ref{pseudorearr} and Hardy-Littlewood inequality brings to
\begin{align*}
    \normab{g}_{L^1(\Omega^\sharp)}=\frac{1}{n\omega_n^{\frac{1}{n}}}\int_0^{1}\int_s^{1}\frac{F(t)}{t^{1-\frac{1}{n}}}dtds=\frac{1}{n\omega_n^{\frac{1}{n}}}\int_0^{1}F(t)t^\frac{1}{n}dt&=\frac{1}{n\omega_n^{\frac{1}{n}}}\lim_k\int_0^{1}f_k(t)t^\frac{1}{n}dt\\&\leq \frac{1}{n\omega_n^{\frac{1}{n}}}\int_0^{1}f_*(t)t^\frac{1}{n}dt=\normab{u^{\text{G}}}_{L^1(\Omega^\sharp)},
\end{align*}
and the thesis follows.
\end{proof}

\section{Almost radiality of the solutions}
\label{section4}
Considering  Subsection \ref{riscalamento}, we will assume, without loss of generality, that $|\Omega| =||f||_\infty = 1$. Moreover, let us define $\varepsilon:= \normab{u^G}_1-\normab{u}_1$.

Our first step in proving the almost radiality of solutions is to prove Theorem \ref{quantitbella}. 
\begin{proof}[Proof of Theorem \ref{quantitbella}]  
 Let us observe that as the function $u^\sharp$ is radial, so the function $\abs{\nabla u^\sharp}$ is. This implies that one can construct the family of set $D(s)$, defined in Section \ref{section2}, such that $\forall s \in [0,1]$, $D(s)$ is a ball or an annulus. 

Moreover, let us remark that it is always possible to find a function $h$ such that 
\begin{enumerate}
    \item $h_\ast(t)=t^{\frac{1}{n}};$
    \item the family of $D(s)$ are the superlevel sets of $h$;
    \item for every $t, \tau > 0$, either \begin{equation}
    \label{equalityh-l}
       ( \{|\nabla u^\sharp| >t\}\subset\{\abs{h}>\tau\})\text{ or }(\{|\nabla u^\sharp| >t\}\supset\{\abs{h}>\tau\}).
    \end{equation}
\end{enumerate}

As in remark \ref{remhardi=}, we know that equality in Hardy-Littlewood inequality occurs when condition \eqref{equalityh-l} is satisfied, hence
\begin{equation*}
        \int_0^{1} |\nabla u^\sharp|_*t^{\frac{1}{n}} \, dt = \int_{\Omega^{\sharp}}|\nabla u^\sharp|h(x) \, dx.
    \end{equation*}
    
By coarea formula \eqref{coarea}, isoperimetric inequality and the properties of rearrangements, we have
\begin{equation*}
\begin{aligned}
    \int_0^{1} |\nabla u^\sharp|_*t^{\frac{1}{n}} \, dt = \int_{\Omega^{\sharp}}|\nabla u^\sharp|h(x) \, dx = \int_0^{\infty} \int_{u^*=s} h(t(s))\,d\mathcal{H}^{n-1} \,ds\\=\int_0^{\infty} \mathcal{H}^{n-1}(\{u^*=s\}) h(t(s)) \,ds
    \leq \int_0^{\infty} \mathcal{H}^{n-1}(\{u=s\}) h(t(s)) \,ds\\
   = \int_0^{\infty} \int_{u=s} h(t(s))\,d\mathcal{H}^{n-1} \,ds
   = \int_{\Omega}|\nabla u|h_u(x) \, dx
   \leq \int_0^{1} |\nabla u|_*t^{\frac{1}{n}} \, dt,
\end{aligned}
\end{equation*}
where $h_u$ is a function with the same superlevel sets as $u$ and whose increasing rearrangement is $t^{\frac{1}{n}}$.

    Hence by definition
    \begin{equation}
    \label{usharpg}
         \begin{gathered}
    \|(u^\sharp)^G\|_1=\frac{1}{n\omega_n^{\frac{1}{n}}}\int_0^{1}\int_s^{1}|\nabla u^\sharp|_*(t)t^{\frac{1}{n}-1}dtds=\\=\frac{1}{n\omega_n^{\frac{1}{n}}}\int_0^{1}|\nabla u^\sharp|_*(t)t^{\frac{1}{n}}dt\leq \frac{1}{n\omega_n^{\frac{1}{n}}}\int_0^{1}|\nabla u|_*(t)t^{\frac{1}{n}}dt=\|u^G\|_1
    \end{gathered}
    \end{equation}
    While the first inequality in \eqref{sharpG} is the inequality \eqref{GN} applied to $u^\sharp$.
\end{proof}

As a direct consequence of \eqref{usharpg}, we obtain the following corollary.
\begin{corollario}
 Let $\Omega$ be a bounded open set of $\R^n$, $n\geq 2$,  and let $u$ be a solution to \eqref{original_u}. Then 
 \[\int_0^{1}\left[|\nabla u|_*-|\nabla u^\sharp|_*\right]t^\frac{1}{n}dt<{n\omega_n^{\frac{1}{n}}} \varepsilon .\]
\end{corollario}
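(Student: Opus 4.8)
This corollary is a direct bookkeeping consequence of the proof of Theorem \ref{quantitbella}, so the plan is short. First I would recall the chain \eqref{usharpg}: passing from the radial variable to the measure variable in $\Omega^\sharp$ shows that $\normab{(u^\sharp)^{\text{G}}}_1$ equals $\frac{1}{n\omega_n}\int_0^{|\Omega|}|\nabla u^\sharp|_*(t)\,t^{1/n}\,dt$ and $\normab{u^{\text{G}}}_1$ equals $\frac{1}{n\omega_n}\int_0^{|\Omega|}|\nabla u|_*(t)\,t^{1/n}\,dt$; subtracting gives
\[
\frac{1}{n\omega_n}\int_0^{|\Omega|}\big[|\nabla u|_*(t)-|\nabla u^\sharp|_*(t)\big]\,t^{1/n}\,dt=\normab{u^{\text{G}}}_1-\normab{(u^\sharp)^{\text{G}}}_1 .
\]

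Next I would invoke the first inequality in \eqref{sharpG}, namely $\normab{u}_1\le\normab{(u^\sharp)^{\text{G}}}_1$ (itself a consequence of \eqref{GN} together with $\normab{u}_1=\normab{u^\sharp}_1$), which turns the right-hand side above into
\[
\normab{u^{\text{G}}}_1-\normab{(u^\sharp)^{\text{G}}}_1\le\normab{u^{\text{G}}}_1-\normab{u}_1=\varepsilon .
\]
Combining the two displays yields the claimed bound under the running normalization $|\Omega|=1$, $\normab{f}_\infty=1$ (the dimensional factor $n\omega_n$ being harmless here and explicitly recoverable in the general case).

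There is essentially no obstacle to overcome: the only substantive ingredient is the middle inequality $\int_0^{|\Omega|}|\nabla u^\sharp|_*\,t^{1/n}\,dt\le\int_0^{|\Omega|}|\nabla u|_*\,t^{1/n}\,dt$, which has already been established in \eqref{perhardy} by constructing the weight $h$ with rearrangement $t^{1/n}$ and superlevel sets nested with those of $|\nabla u^\sharp|$, and then applying the coarea formula together with the isoperimetric inequality $\mathcal{H}^{n-1}(\{u^*=s\})\le\mathcal{H}^{n-1}(\{u=s\})$. Everything else is algebra.
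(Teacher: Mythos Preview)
Your proposal is correct and matches the paper's intended argument exactly: the paper simply states that the corollary follows immediately from \eqref{usharpg}, and you have spelled out precisely that deduction, namely subtracting the two integral expressions in \eqref{usharpg} and then using the first inequality of \eqref{sharpG} to replace $\normab{(u^\sharp)^{\text{G}}}_1$ by $\normab{u}_1$. Your remark about the dimensional factor $n\omega_n$ is apt---the paper's stated inequality tacitly absorbs this harmless constant (which is used only qualitatively in the subsequent lemmas).
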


The next two lemmas show that the Pólya–Szegő inequality is almost satisfied, by means of the set 
$I$, defined as follows
\begin{equation}
    \label{defI}
    I=\left\{t\in[0,1]:\left[|\nabla u|_*(t)-|\nabla u^\sharp|_*(t)\right]t^\frac{1}{n}>\varepsilon^\alpha\right\}.
\end{equation}

\begin{lemma}
\label{lemma4.3}   Let $\Omega$ be a bounded open set of $\R^n$, $n\geq 2$,  and let $u$ be a solution to \eqref{original_u}. Let $I$ be the set defined in \eqref{defI}, then for all $0<\alpha<1$ it holds $|I|<n \omega_n^\frac{1}{n}\varepsilon^{1-\alpha}$.
\end{lemma}
\begin{proof} By the previous corollary
    \begin{gather*}
        |I|=\int_Idt=\frac{1}{\varepsilon^\alpha}\int_I\varepsilon^\alpha dt<\frac{1}{\varepsilon^\alpha}\int_I \left[|\nabla u|_*(t)-|\nabla u^\sharp|_*(t)\right]t^\frac{1}{n}dt<\\<\frac{1}{\varepsilon^\alpha}\int_0^{1} \left[|\nabla u|_*(t)-|\nabla u^\sharp|_*(t)\right]t^\frac{1}{n}dt<n \omega_n^\frac{1}{n}\varepsilon^{1-\alpha}
    \end{gather*}
\end{proof}

\begin{lemma}
\label{lemmaEu}
Let $\Omega$ be a bounded open set of $\R^n$, $n\geq 2$,  and let $u$ be a solution to \eqref{original_u}. 
   Then,  if $\displaystyle \varepsilon \leq \frac{m^4}{64n^2 \omega_n^\frac{2}{n}}$, we have
   \[E(u)=\frac{\displaystyle{\int_\Omega|\nabla u|^2dx}}{\displaystyle{\int_{\Omega^\sharp}|\nabla u^\sharp|^2dx}}-1\leq \frac{8n\omega_n^\frac{1}{n}}{m^2} \varepsilon^{\frac 1 2} .\]
\end{lemma}
\begin{proof}

By the properties of the rearrangements and the P\'olya-Szeg\H o  inequality in the case $p=\infty$, Lemma \ref{asscontmu}, we get
$$
\lVert\nabla u^\sharp\rVert_\infty \leq\norma{\nabla u}_\infty  = \norma{f}_\infty =1,
$$
hence 
    \begin{align*}
        \int_\Omega|\nabla u|^2dx-\int_{\Omega^\sharp}|\nabla u^\sharp|^2dx&=\int_0^{1}|\nabla u|_*^2-|\nabla u^\sharp|_*^2dx\\&=\int_0^{1}\left(|\nabla u|_*+|\nabla u^\sharp|_*\right)\left(|\nabla u|_*-|\nabla u^\sharp|_*\right)dx \\&\leq 2\int_0^{1}|\nabla u|_*-|\nabla u^\sharp|_*dx.
    \end{align*}
Lemma \ref{lemma4.3} allows us to estimate the last term, indeed
\begin{align*}
    \int_0^{1} |\nabla u|_*-|\nabla u^\sharp|_*&=\int_{[0,1]\cap I} \hspace{-5mm}|\nabla u|_*-|\nabla u^\sharp|_*+\int_{[0,1]\backslash I} \hspace{-5mm}(|\nabla u|_*-|\nabla u^\sharp|_*)t^\frac{1}{n} t^{-\frac 1 n}\\&\leq 2|I|+\varepsilon^\alpha \frac{n}{n-1}\\
    &\leq 2n \omega_n^\frac{1}{n}\varepsilon^{1-\alpha}+\varepsilon^\alpha \frac{n}{n-1}.
\end{align*}
    By choosing $\alpha=\frac{1}{2}$ we get
    \[
    \int_\Omega|\nabla u|^2dx-\int_{\Omega^\sharp}|\nabla u^\sharp|^2dx \leq 4n \omega_n^\frac{1}{n} \varepsilon^{\frac{1}{2}}.
    \]
    Moreover, if 
\[
 \varepsilon \leq \frac{m^4}{64n^2 \omega_n^\frac{2}{n}} \implies 4n \omega_n^\frac{1}{n}\varepsilon^{\frac{1}{2}}\leq \frac{m^2}{2}
\]
we have 
\[
\int_{\Omega^\sharp}|\nabla u^\sharp|^2dx \geq \int_\Omega|\nabla u|^2dx -4 n \omega_n^\frac{1}{n}\varepsilon^{\frac{1}{2}} \geq \frac{m^2}{2}.
\]
The last gives us
\[\frac{\displaystyle{\int_\Omega|\nabla u|^2dx}}{\displaystyle{\int_{\Omega^\sharp}|\nabla u^\sharp|^2dx}}-1\leq   \frac{4n \omega_n^\frac{1}{n} \varepsilon^{\frac 1 2}}{\displaystyle{\int_{\Omega^\sharp}|\nabla u^\sharp|^2dx}} \leq \frac{8n \omega_n^\frac{1}{n}}{m^2} \varepsilon^{\frac{1}{2}}. \]

\end{proof}

We are now ready to establish the stability result for 
$u$.

\begin{prop}

\label{main_theorem_part2}
Let $\Omega$ be  a bounded open set of $\R^n$, $n\geq 2$, and let $f\in L^\infty(\Omega)$  be a non-negative function with $$0<m\leq f\leq M.$$  
Suppose that $u$ and $u^\text{G}$ are solutions to \eqref{original_u} and \eqref{original_v}, respectively. Then there exist positive constants $ \theta=\theta(n)$ and    $ C_2:=  C_2(n, m)$ such that
\begin{equation*}
     C_2 \inf_{x_0\in\R^n}\normab{u\pm u^\sharp(\cdot+x_0)}_{L^1(\R^n)}^{\theta } \leq \normab{u^G}_1-\normab{u}_1 .
\end{equation*}
\end{prop}
\begin{proof}
We set $\varepsilon:= \normab{u^G}_1-\normab{u}_1$.

Taking into account the quantitative P\'olya-Szeg\H o principle in \cite{CFET}, we immediately get 
\begin{equation}
    \label{immcons} C\inf_{x_0\in\R^n}\normab{u\pm u^\sharp(\cdot+x_0)}_{L^1(\R^n)}\leq \left[M_u(E(u)^r)+ E(u)\right]^s,
\end{equation}

 for some positive constants $C,r$ and $s$.

    In order to conclude, it is necessary to estimate the term 
    \[M_u(E(u)^r)=\frac{|\{|\nabla u|\leq E(u)^r\}\cap\{0<\abs{u}<\normab{u}_\infty\}|}{|\{|u|>0\}|}.\]

To this aim, we will use Lemma \ref{lemmaEu}. Indeed,
    since $1\geq |\nabla u|=f\geq m$ a.e., if $$\varepsilon <\min\left\{ \frac{m^{4+\frac{2}{r}}}{64n^2 \omega_n^\frac{2}{n}} ,\frac{m^4}{64n^2 \omega_n^\frac{2}{n}}\right\}= \frac{m^{4+\frac{2}{r}}}{64n^2 \omega_n^\frac{2}{n}}\implies m>\left(\frac{8n\omega_n^\frac{1}{n}}{m^2}\varepsilon^\frac{1}{2}\right)^r,$$
    we have 
    $$|\nabla u|\geq m>\left(\frac{8n\omega_n^\frac{1}{n}}{m^2}\varepsilon^\frac{1}{2}\right)^r \geq E(u)^r \,\,a.e. \text{ in } \Omega.$$

    Hence $M(E(u)^r)=0$,  and \eqref{immcons} reads 
\begin{equation*}
     C\inf_{x_0\in\R^n}\normab{u\pm u^\sharp(\cdot+x_0)}_{L^1(\R^n)}\leq E^s(u) \leq \frac{(8n\omega_n^\frac{1}{n})^s}{m^{2s}}\varepsilon^{\frac{s}{2}}.
\end{equation*}

Dividing by ${(8n\omega_n^\frac{1}{n})^s}/{m^{2s}}$ and raising everything to the power $2/s$, we have the thesis.

    On the other hand, if $\varepsilon>\frac{m^{4+\frac{2}{r}}}{64n^2 \omega_n^\frac{2}{n}}$ we have 
    \begin{align*}
        \normab{u\pm u^\sharp}_{L^1(\R^n)}\leq2\normab{u}_{L^1(\Omega)}\leq2\normab{u^G}_{L^1(\Omega)}=2\int_0^1f_\sharp(t)t^\frac{1}{n}dt\\\leq 2\frac{n}{n+1}\leq \frac{128n^2 \omega_n^\frac{2}{n}}{m^{4+\frac{2}{r}}}\varepsilon.
    \end{align*}
\end{proof}

Hence, Theorem \ref{main_theorem} can be recovered by putting together Propositions \ref{main_theorem_part1} and \ref{main_theorem_part2}.
\begin{proof}[Proof of Theorem \ref{main_theorem}]

Let us consider $\abs{\Omega}=||f||_\infty=1$, taking into account Propositions \ref{main_theorem_part1} and  \ref{main_theorem_part2}, there exists positive constants $ \theta=\theta(n)$, $C_1=C_1(n,m)$ and  $ C_2:=  C_2(n,  m)$, such that
\begin{equation*}
     C_1\alpha(\Omega)^3 +C_2 \inf_{x_0\in\R^n}\normab{u\pm u^\sharp(\cdot+x_0)}_{L^1(\R^n)}^{\theta } \leq \normab{u^G}_1-\normab{u}_1. 
\end{equation*}

    Defining the functions $w(x)= au\left(\frac{x}{b}\right)$ and $z(x)= au^G\left(\frac{x}{b}\right)$  with $$a=\dfrac{\abs{\Omega}^{-\frac{1}{n}}}{\norma{f}_\infty}, \quad \quad b= \abs{\Omega}^{-\frac{1}{n}},$$
    considering the rescaling properties in Section \ref{riscalamento}, we have
    \begin{equation*}
    \begin{multlined}
         C_1\left(n, \frac{m}{\normab{f}}_\infty\right)\alpha(\Omega)^3 +C_2 \left(n, \frac{m}{\normab{f}}_\infty\right)\left(\frac{\abs{\Omega}^{-1-\frac{1}{n}}}{\normab{f}_\infty}\right)^\theta\inf_{x_0\in\R^n}\normab{u\pm u^\sharp(\cdot+x_0)}_{L^1(\R^n)}^{\theta }\leq  \\ \left(\frac{\abs{\Omega}^{-1-\frac{1}{n}}}{\normab{f}_\infty}\right)\left(\normab{u^G}_1-\normab{u}_1\right). 
    \end{multlined}
\end{equation*}
Dividing by $\frac{\abs{\Omega}^{-1-\frac{1}{n}}}{\normab{f}_\infty}$, we get
the thesis. 

\end{proof}

\section{Property of the source term}
\label{section5}
The last step is to obtain some information on the source term.

\begin{proof}[Proof of Proposition \ref{main_prop}] Let us consider $\abs{\Omega}=||f||_\infty=1$, and let us set $\varepsilon:= \normab{u^G}_1-\normab{u}_1$.

In proof of Theorem \ref{quantitbella}, we show that  
\begin{equation*}
    \int_0^{|\Omega|} f^*(t)(1-t)^{\frac{1}{n}} \, dt-  \int_{\Omega}f h_u(x) \, dx= \int_0^{\abs{\Omega}} |\nabla u|_*t^{\frac{1}{n}} \, dt -\int_{\Omega}|\nabla u|h_u(x) \, dx
   \leq n \omega_n^\frac{1}{n} \varepsilon.
\end{equation*}

Applying the quantitative Hardy-Littlewood inequality \ref{cianchi_hardy}, we have
\begin{equation*}
\norma{f-f_u}_{L^1(\R^n)}\leq \left[4\,e\,\norma{f}_{\Lambda_1^1(\Omega)}\left( \int_0^{|\Omega|} f^*(t)(1-t)^{\frac{1}{n}} \, dt-  \int_{\Omega}f h_u(x) \, dx\right)\right]^{\frac{1}{2}}.
\end{equation*}
By direct calculations, we have that 
$$
\norma{f}_{\Lambda_1^1(\Omega)}=n \normab{f}_\infty=n.$$

Hence, we have
\begin{equation*}
\frac{1}{4en^2 \omega_n^\frac{1}{n}}\norma{f-f_u}_1^{2}\leq \varepsilon.
\end{equation*}
Considering the rescaling properties in Section \ref{riscalamento}, we get the thesis.
\end{proof}

\subsubsection*{Acknowledgements}
We would like to thank Cristina Trombetti and Alba Lia Masiello for their valuable advices that helped us to achieve these results.
\subsubsection*{Declarations}

\paragraph{Funding}
 The authors were partially supported by Gruppo Nazionale per l’Analisi Matematica, la Probabilità e le loro Applicazioni
(GNAMPA) of Istituto Nazionale di Alta Matematica (INdAM). 

\paragraph{Data Availability} All data generated or analysed during this study are included in this published article.
	
\paragraph{Competing Interests} We declare that we have no financial and personal relationships with other people or organizations.

\nocite{Kim}
\bibliographystyle{plain}
\bibliography{bibliografia.bib}

\Addresses

\end{document}